\theoremstyle{plain}
\newtheorem{thm}{Theorem}[section]
\newtheorem{lem}[thm]{Lemma}
\theoremstyle{definition}
\theoremstyle{remark}
\newtheorem{rem}{Remark}[section]
\numberwithin{equation}{section}
\newcommand{\norm}[1]{\left\Vert#1\right\Vert}
\newcommand{\abs}[1]{\left\vert#1\right\vert}
\newcommand{\set}[1]{\left\{#1\right\}}
\newcommand{\qnt}[1]{\left(#1\right)}
\newcommand{\bracket}[1]{\left[#1\right]}
\newcommand{\dif}{\mathrm{d}}
\DeclareSymbolFont{lettersA}{U}{pxmia}{m}{it}
\DeclareMathSymbol{\piup}{\mathord}{lettersA}{"19}
\newcommand{\Rmnum}[1]{\expandafter\@slowromancap\romannumeral#1@}
\begin{document}
\title[Mixed boundary problem in a thin domain]
{Mixed type boundary value problem of elliptic equation in a thin domain}

\author{Dian Hu}
\address{D. Hu: School of Sciences, East China University Of Science And Technology,
Shanghai, 200237, China.}
\email{\tt hudianaug@qq.com and hudianaug@gmail.com}

\author{Genggeng Huang}
\address{G. Huang: School of Mathematical Sciences, Fudan University, Shanghai, 200433, China.}
\email{\tt genggenghuang@fudan.edu.cn}

\keywords{}
\subjclass[2000]{35L65,35L67,35M10,35B35,76H05,76N10}
\date{\today}

\begin{abstract}
In this paper, we prove the a priori estimates for two-dimensional second order homogeneous linear elliptic equations in a narrow region. In a crescent-shaped area, part of the boundary is subject to an oblique derivative boundary condition, while the other part of the boundary is subject to a Dirichlet boundary condition. We show that, as the crescent-shaped area collapses into a segment under suitable conditions, the boundary value problem obeys uniform Schauder estimates and induces an asymptotic estimate.
\end{abstract}

\maketitle

\section{Introduction}\label{sect:Introduction}
In this paper, we investigate second-order elliptic equations in a 2D crescent-shaped region with mixed boundary value condition. Consider a function $y=f(x)$ satisfying 
\begin{equation}
f(0)=f(1)=0,\quad 
f(x)>0,\quad x\in (0, 1).
\end{equation}
A crescent-shaped domain $\Omega$ is defined as
\begin{equation}\label{eq:Omega}
\Omega:=\set{(x, y)| x\in(0, 1), 0< y<f(x)},
\end{equation}
with the boundaries
\begin{equation}\label{eq:OmegaB}
\partial_+\Omega:=\set{(x, f(x))|x\in[0, 1]},\quad \partial_0\Omega:=\set{(x, 0)|x\in[0, 1]}.
\end{equation}

We study the following mixed boundary value problem of linear elliptic equation
\begin{equation}\label{eq:PotentialEquations}
\begin{split}
&\mathcal L u:=A(x, y)u_{xx}+B(x, y)u_{xy}+C(x, y)u_{yy}+D(x, y)u_x+E(x, y)u_y\\
&\quad\quad=0,\quad \text{in}\quad \Omega,\\
& u(x, y)=\varphi(x),\quad \text{on}\quad \partial_+\Omega,\\
& u_y(x, y)+G(x)u_x(x, y)=0,\quad \text{on}\quad \partial_0\Omega.
\end{split}
\end{equation}
The coefficients $A, B,C,D,E,G$ satisfy following conditions
\begin{equation}\label{eq:uniformelliptic}
\begin{split}
&A\xi_1^2+B\xi_1\xi_2+C \xi_2^2\ge \lambda |\xi|^2,\quad \mbox{for}~ \xi\in \mathbb R^2\\
&\|A\|_{C^{\gamma}([0,1]^2)}+\|B\|_{C^{\gamma}([0,1]^2)}+\|C\|_{C^{\gamma}([0,1]^2)}\\
+ &\|D\|_{C^{\gamma}([0,1]^2)}+\|E\|_{C^{\gamma}([0,1]^2)}+\|G\|_{C^{2,\gamma}([-1, 2])}\le \Lambda
\end{split}
\end{equation}
for some positive constants $\lambda,\Lambda$ and $\gamma\in (0,1)$.

Assume that $f$ satisfies 
\begin{equation}\label{eq:fEstimates}
\|f\|_{C^{2, \gamma}([0, 1])}\le \sigma,
\end{equation}
and
\begin{equation}\label{eq:FiniteGrowCondition}
\begin{cases}
\inf\limits_{x\in[0, 1]}\abs{\frac{f(x)}{\sin(\pi x)}}=\Pi>0,\\
\norm{f}_{C^2([0, 1])}=C_f\Pi=\bar{\Pi}.
\end{cases}
\end{equation}
Then we have
\begin{thm}\label{thm:GlobalEstimates}
Let $u$ be the solution of \eqref{eq:PotentialEquations}. Then there exists $\sigma_0>0$ such that for any $\sigma\in (0,\sigma_0]$, one has
\begin{equation}\label{eq:ObliqueDerivativeSchauder}
\norm{u}_{C^{2, \gamma}(\overline \Omega)}\le \bar{C}\norm{\varphi}_{C^{2, \gamma}([0,1])},
\end{equation}
where the constant $\bar{C}$ depends only on $\gamma,\lambda,\Lambda$ and $C_f$.
\end{thm}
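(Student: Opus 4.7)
The goal is a Schauder bound whose constant is independent of $\sigma$, despite the fact that $\Omega$ collapses vertically and pinches at the two corners where Dirichlet meets the oblique condition. My plan is to rescale vertically in order to restore non-degenerate geometry, apply classical mixed-type Schauder theory away from the corners, and handle the corners by a dedicated perturbative/compactness argument. Concretely, I would introduce $\tilde y = y/\Pi$, $\tilde f(x) = f(x)/\Pi$, and $v(x,\tilde y) = u(x,\Pi\tilde y)$. Hypothesis \eqref{eq:FiniteGrowCondition} ensures that $\sin(\pi x)\le \tilde f(x)$ and $\|\tilde f\|_{C^2}\le C_f$, so that $\tilde\Omega := \{0<\tilde y<\tilde f(x),\ x\in(0,1)\}$ has uniformly bounded and uniformly non-degenerate geometry. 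After multiplication by $\Pi^{2}$, the transformed equation reads
\begin{equation*}
C v_{\tilde y\tilde y}+\Pi B v_{x\tilde y}+\Pi^{2} A v_{xx}+\Pi E v_{\tilde y}+\Pi^{2} D v_x=0,
\end{equation*}
with $v_{\tilde y}+\Pi G v_x=0$ on $\{\tilde y=0\}$ and $v(x,\tilde f(x))=\varphi(x)$ on the upper boundary. In the limit $\Pi\to 0$ this degenerates to $v_{\tilde y\tilde y}=0$, $v_{\tilde y}|_{\tilde y=0}=0$, $v|_{\tilde y=\tilde f}=\varphi$, whose unique solution is $v\equiv\varphi(x)$. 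The desired Schauder bound is therefore the quantitative statement that for small $\sigma$ the full problem is a uniformly controlled perturbation of this trivial limit.

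\textbf{Estimates away from the corners.} On any subset of $\overline{\tilde\Omega}$ kept at a definite distance from $(0,0)$ and $(1,0)$, the boundary is uniformly $C^{2,\gamma}$, the oblique direction is uniformly transverse to $\{\tilde y=0\}$, and the Dirichlet and oblique portions of the boundary are well separated. Interior Schauder estimates, Lieberman-type oblique-derivative Schauder estimates on $\partial_0\Omega$, and classical Dirichlet boundary estimates on $\partial_+\Omega$ then give $C^{2,\gamma}$ control of $v$ with constants depending only on $\lambda,\Lambda,\gamma$ and $C_f$, to be combined via a partition of unity.

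\textbf{The corners: main obstacle.} At $(0,0)$, expand $\tilde f(x)=\tilde f'(0)x+O(x^{1+\gamma})$; the pointwise inequality $\tilde f\ge\sin(\pi x)$ forces $\tilde f'(0)\ge\pi$, while $\|\tilde f\|_{C^2}\le C_f$ gives $\tilde f'(0)\le C_f$, so locally $\tilde\Omega$ resembles a wedge whose opening angle $\arctan\tilde f'(0)$ stays in a compact subrange of $(0,\pi/2)$ uniform in $\sigma$. Freezing the coefficients of $\mathcal L$ at the corner reduces the model problem to a constant-coefficient elliptic equation on this wedge, with Dirichlet data on the slanted side and (to leading order in $\Pi$, since the tangential term $\Pi G v_x$ in the oblique condition is small) Neumann data on $\{\tilde y=0\}$. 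The associated angular Sturm–Liouville problem has a spectral gap uniform in the admissible angle, producing a leading singular exponent $\nu>2+\gamma$; I plan to exploit this in one of two equivalent ways: (a) construct explicit barriers of the form $r^{\nu}\omega(\theta)$ in local polar coordinates and run weighted Campanato/iteration estimates, or (b) argue by contradiction via a dyadic blow-up in annuli around the corner — any hypothetical counterexample sequence rescales, by the non-corner estimates above, to a nontrivial bounded solution on the model wedge, contradicting its Liouville property. The smallness of $\sigma$ controls the error between the frozen-coefficient straight-edge model and the actual problem; the analysis at $(1,0)$ is symmetric. The main technical difficulty concentrates in this corner step, and once it is carried out, gluing with the preceding interior and non-corner boundary estimates via a partition of unity yields \eqref{eq:ObliqueDerivativeSchauder}.
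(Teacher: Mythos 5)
Your proposal correctly identifies the two sources of trouble (the collapse of the domain and the Dirichlet/oblique corners), but the concrete plan has a genuine gap at its first step, and this gap is not confined to the corners. After the anisotropic rescaling $\tilde y=y/\Pi$ the operator becomes $Cv_{\tilde y\tilde y}+\Pi Bv_{x\tilde y}+\Pi^{2}Av_{xx}+\Pi Ev_{\tilde y}+\Pi^{2}Dv_{x}$, whose ellipticity ratio degenerates like $\Pi^{-2}$; consequently the claim that interior, oblique-derivative and Dirichlet Schauder theory ``away from the corners'' yield constants depending only on $\lambda,\Lambda,\gamma,C_f$ does not follow --- uniform ellipticity is exactly what you have traded away to make the geometry non-degenerate. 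Working instead in the original variables, the domain away from the corners is a strip of width $h\sim f(x)\le\sigma$ with Dirichlet data on top and Neumann-type data on the bottom, and the classical mixed-boundary Schauder constant on such a strip blows up like $h^{-2-\gamma}$ under the natural isotropic zoom, unless one first shows that $u$ agrees with an explicit second-order polynomial to order $h^{2+\gamma}$. That quantitative closeness is the heart of the matter and is missing from your proposal. The paper supplies it by setting $v=u-\varphi$, subtracting the quadratic $P_{x_0}$ of \eqref{eq:px0} built from $f^2$ so that $\mathcal L(v-P_{x_0})$ vanishes at $(x_0,f(x_0))$, and proving $\|v-P_{x_0}\|_{L^\infty(\Omega^1_{x_0})}\le \bar C\|\varphi\|_{C^{2,\gamma}}f(x_0)^{2+\gamma}$ (Lemma \ref{lem:C2HEstimates}) via a corner barrier $f(x_0)^{2+\gamma}\bigl((r/r_0)^{|\alpha|}+(r/r_0)^{-|\alpha|}\bigr)\cos(k\theta)$ with $k\sim 1/f'(0)\to\infty$; only then does an isotropic zoom at scale $f(x_0)$ plus standard Schauder close the estimate, for \emph{every} $x_0$, not just near the vertices.

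The corner step itself also needs repair. The ``uniform opening angle in a compact subrange of $(0,\pi/2)$'' is computed in the rescaled coordinates where the operator is degenerate; in coordinates where the operator remains uniformly elliptic the angle is $\arctan f'(0)\to 0$, so there is no fixed model wedge, the leading singular exponent $\nu\sim \pi/(2f'(0))$ diverges, and the dyadic blow-up/Liouville alternative (b) has no well-defined limit problem to contradict. Your alternative (a) --- explicit barriers $r^{\nu}\omega(\theta)$ --- is in the right spirit and is essentially what the paper does, but it must be made quantitative in $f'(0)$ (the paper's choice $k=\pi/(8f'(0))$, $\alpha=\pm k/M$ with $M=100\Lambda/\lambda$) and must be coupled to the polynomial subtraction to produce $(2+\gamma)$-order decay rather than mere boundedness. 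Finally, the oblique term $G u_x$ should not be treated as a small perturbation of the Neumann condition: the paper removes it exactly by the change of variables $\mathcal P_1$ along the integral curves of $G$, which is cleaner and avoids having to track the perturbation through the barrier argument.
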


The Dirichlet and oblique derivative boundary value problems have been systematically studied in a vast literature. We refer readers to, for example \cite{GilbargTrudinger,HanLin,LadyzenskajaSolonnikovUral'tseva} and references
therein. Furthermore, mixed type boundary value problems in nonsmooth regions have also been discussed, see \cite{AzzamKreyszig,Grisvard,KozlovMaz'yaRossman,Lieberman1,Miller}. 
Roughly speaking, the Schauder theory for boundary value problem of second-order elliptic equations says that if all the coefficients and data are Hölder continuous, then the second order Hölder norms of the solution can be estimated by the inhomogeneous term and boundary values. How the constants of estimates depend on the settings of the region, the coefficients and the boundary value is a complicated problem.
In this paper, by Theorem \ref{thm:GlobalEstimates}, we confirm that the estimate remains uniform even when the region for the mixed boundary value problem degenerates into a line segment in a specific way. Such an a priori estimate ensures uniform equicontinuity of the derivatives of the solution as the region shrinks. In other words, the rate of change of the solution and its derivatives is bounded. As the region degenerates into a line segment, the variation of the solution and its derivatives along specific directions diminishes. Thus, the solution converges to an asymptotic state. For our mixed boundary value problem, the compatibility conditions are satisfied at the corners. Otherwise, uniform estimates cannot be expected. For example, generically, if Dirichlet conditions are given along $\partial_0\Omega$ and $\partial_+\Omega$, then normal derivative would blow up as the region shrinks. 

To our knowledge, for elliptic boundary value problem in a thin domain, its eigenvalue has been discussed by variational in many works, see \cite{ArrietaNakasatoPereira,BorisovFreitas,FerraressoProvenzano,FriedlanderSolomyak,Grieser,JimboKurata,Krejcirik,KuchmentZeng,PereiraSilva,Post,RubinsteinSchatzman1,RubinsteinSchatzman2} and references therein. The motivation for our paper comes from the research of free boundary problems for elliptic conservation law systems. This type free boundary problem of elliptic partial differential equation does not have a variational structure, so we cannot establish the existence of solutions through variational methods, see \cite{AltCaffarelli,Caffarelli,CaffarelliSalsa,Lin}. We need a solution that complies with the structure of conservation laws, to serve as the starting point of a continuity method, to obtain a general solution, see \cite{BaeXiang,ChenGQ,ChenFeldman1,ChenFeldman2,ChenFeldmanXiang,ChenS,EllingLiu}. As the supersonic incoming flow accelerates to the limit speed, the free boundary area after the shock might shrink to a low-dimensional surface, see \cite{Anderson,ChenKuangXiang,ChenXinYin,HuZhang,JinQuYuan1,KuangXiangZhang,LiWittYin,QuYuanZhao,Van,WangZhang,XinYin,XuYin}. In this process, the discussion for a priori estimates and asymptotic behavior of the solution would help us to investigate the flow field and the asymptotic states might play the role of initial solutions for the continuity method.

The organization of this paper is as follows. In Section 2, under the assumptions of local straight boundary condition of $\partial_+\Omega$ and normal derivative boundary value condition on $\partial_0\Omega$, we proved our local estimate for \eqref{eq:ObliqueDerivativeSchauder}. In Section 3, by applying three coordinate transformations, we extend the estimate from Section 2 to the case of general boundary conditions and proved our main theorem. In Section 4, we present a similar conclusion for a weighted norm. In Section 5, we prove an asymptotic estimate as $\|f\|_{C^{2, \gamma}([0, 1])}\rightarrow0$.

\section{Local Straight Corner}\label{sect:LocalStraightCorner}

In this section, we assume that the boundary $\partial_+\Omega$ satisfies the following {\bf straight corner condition}:
\begin{equation}\label{eq:OmegaCornerCondition}
\begin{cases}
f'(x)=f'(0)>0,\quad\mbox{for}\quad x\in[0, 3/4],\\
\Omega\subset \set{(x,y)| 0<y< f'(0)x,~x>0},
\end{cases}
\end{equation}
and 
\begin{equation*}
G=0\quad\mbox{on}\quad\partial_0\Omega.
\end{equation*}
Thus the boundary condition on $\partial_0\Omega$ in \eqref{eq:PotentialEquations} becomes Neumann boundary condition. Then we have the following theorem.
\begin{thm}\label{thm:LocalEstimate}
Let $u$ be the solution of \eqref{eq:PotentialEquations}. Then there exists $\sigma_0>0$ such that for any $\sigma\in (0,\sigma_0]$, one has
\begin{equation}\label{eq:LocalEstimate}
\norm{u}_{C^{2, \gamma}(\overline \Omega\cap\set{x\in[0, 2/3]})}\le \bar{C}\norm{\varphi}_{C^{2, \gamma}([0,1])},
\end{equation}
where  the two constants $\sigma_0,\bar C$ depend only on $\gamma,\lambda,\Lambda$.
\end{thm}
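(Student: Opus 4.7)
\emph{Overall strategy and estimates away from the corner.} The goal is a uniform local $C^{2,\gamma}$ estimate near the Dirichlet--Neumann corner $(0,0)$; away from this corner, $\partial_0\Omega$ is the straight segment $\{y=0\}$ and $\partial_+\Omega$ is locally straight by the corner condition \eqref{eq:OmegaCornerCondition}, so standard interior Schauder estimates, together with one-sided Dirichlet Schauder estimates on $\partial_+\Omega$ and Neumann Schauder estimates on $\partial_0\Omega$, apply with constants depending only on $\lambda,\Lambda,\gamma$. The $C^0$-control $\norm{u}_{C^0(\overline\Omega)}\le\norm{\varphi}_{C^0}$ follows from the maximum principle, since $G\equiv 0$ gives a Neumann condition on $\partial_0\Omega$.

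\emph{Reflection and coefficient preparation at the corner.} Near $(0,0)$ the domain coincides with the sector $\{0<y<f'(0)x,\ 0<x<3/4\}$ of opening angle $\alpha:=\arctan(f'(0))\le\arctan\sigma$. The plan is to reflect $u$ evenly across $\partial_0\Omega$: set $\tilde u(x,y):=u(x,|y|)$ on the doubled sector $\tilde S:=\set{(x,y)\mid |y|<f'(0)x,\ 0<x<3/4}$. The Neumann condition $u_y=0$ on $\{y=0\}$ ensures that $\tilde u\in C^1$ across the reflection line and satisfies an elliptic equation whose coefficients $A,C,D$ are even extensions (hence $C^\gamma$ in $\tilde S$) while $B,E$ are odd extensions with jumps of size $2B(x,0)$, $2E(x,0)$ across $\{y=0\}$. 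To eliminate these jumps, first perform a near-identity change of variables $(X,Y)=(x+\phi(x,y),\,y)$ with $\phi(x,0)=0$ and $\phi_y(x,0)=-B(x,0)/(2C(x,0))$, together with an analogous adjustment killing $E(x,0)$. This map fixes $\{y=0\}$ pointwise and aligns the conormal along the boundary; in the new coordinates the principal cross coefficient and the first-order $Y$-coefficient vanish identically on $\{Y=0\}$, so the odd reflections become continuous there and the reflected equation has Hölder-continuous coefficients throughout $\tilde S$. The residual error produced in the boundary condition is of higher order in $y$ and will be absorbed as a small Hölder forcing.

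\emph{Corner Schauder estimate and main obstacle.} The resulting problem is a Dirichlet problem in $\tilde S$ with vertex angle $2\alpha\le 2\arctan\sigma$. Choose $\sigma_0>0$ so that $2\arctan\sigma_0<\pi/(2+\gamma)$; then the leading singular exponent $\pi/(2\alpha)$ of the Dirichlet sector exceeds $2+\gamma$, and classical corner Schauder theory (via Kondratiev-type weighted estimates, or by explicit barriers built from $r^{\pi/(2\alpha)}$) yields
\begin{equation*}
\norm{\tilde u}_{C^{2,\gamma}(\tilde S\cap\{x\le 2/3\})}\le C(\lambda,\Lambda,\gamma)\,\norm{\varphi}_{C^{2,\gamma}([0,1])}.
\end{equation*}
Restricting to $y>0$ and combining with the away-from-corner estimates via a cutoff argument gives \eqref{eq:LocalEstimate}. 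The main obstacle is the coefficient-preparation step: the change of variables must simultaneously kill $B(\cdot,0)$ and $E(\cdot,0)$ and leave only a perturbatively small correction to the Neumann condition on $\partial_0\Omega$, so that the reflected Dirichlet problem is genuinely set up for corner Schauder. Controlling this correction in Hölder norm and closing a Schauder iteration (using that $\sigma\le\sigma_0$) is the technical heart of the argument.
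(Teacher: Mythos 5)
There are two genuine gaps, and together they bypass what is actually the heart of the theorem: the constants must be uniform as the domain becomes arbitrarily thin \emph{everywhere}, not merely near the vertex.

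First, your claim that away from the corner ``standard interior Schauder estimates, together with one-sided Dirichlet and Neumann Schauder estimates, apply with constants depending only on $\lambda,\Lambda,\gamma$'' is not correct. At any $x_0\in(0,2/3]$ the two boundary components are at distance $f(x_0)\le\sigma$, so every point of $\Omega$ is within distance $\sigma$ of \emph{both} boundaries carrying \emph{different} boundary conditions. Local Schauder estimates in a region of width $h$ carry constants that blow up like $h^{-2-\gamma}$ unless one first shows that the solution is approximated at scale $h$ by an explicit profile to accuracy $O(h^{2+\gamma})$. This is exactly what the paper's Lemma \ref{lem:C2HEstimates} provides: the polynomial $P_{x_0}$ of \eqref{eq:px0} and the barrier $H$ yield $\|v-P_{x_0}\|_{L^\infty(\Omega^1_{x_0})}\le \bar C\|\varphi\|_{C^{2,\gamma}}f(x_0)^{2+\gamma}$, after which rescaling $\Omega^1_{x_0}$ by $1/f(x_0)$ produces a unit-size, nondegenerate domain where standard Schauder closes. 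Without such an approximation step your ``away from the corner'' estimate has no uniform constant, and this step is the technical heart of the proof, not the coefficient preparation for reflection.

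Second, the corner step as written also does not close. After even reflection you obtain a Dirichlet problem in a sector of opening $2\alpha$ with $\alpha\to 0$ as $\sigma\to 0$, and you invoke ``classical corner Schauder theory'' because $\pi/(2\alpha)>2+\gamma$. But the issue is not the singular exponent at the vertex; it is that the constant in any such sector estimate must be shown to be \emph{uniform as the opening angle degenerates to zero}, and at a fixed distance from the vertex the reflected sector is again a thin Dirichlet--Dirichlet strip. For generic Dirichlet data on the two edges the uniform estimate is in fact \emph{false} (the paper's introduction points out that normal derivatives then blow up as the region shrinks); it holds here only because the reflected data on the two edges agree, and exploiting that compatibility quantitatively requires precisely the kind of explicit, angle-tracking construction the paper carries out (the barrier $r^{-\alpha}\cos(k\theta)$ with $k=\pi/(8f'(0))\to\infty$, chosen so that $-\mathcal LY\ge \tfrac{\lambda k^2}{8}r^{-\alpha-2}$ with constants independent of $f'(0)$). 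Citing Kondratiev-type theory as a black box does not deliver this uniformity. Your reflection-plus-coefficient-preparation idea could in principle be combined with a Campanato/polynomial-approximation argument in the doubled sector to reach the same conclusion, but as proposed the two load-bearing steps are asserted rather than proved.
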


Set 
\begin{equation}\label{eq:v}
v=u-\varphi.
\end{equation}
Then $v$ solves 
\begin{equation}\label{eq:GeneralDirichletCondition0}
\begin{split}
& \mathcal L v=-\mathcal L(\varphi)=-A(x, y)\varphi''(x)-D(x, y)\varphi'(x),\quad \mbox{in}\quad \Omega,\\
& v=0,\quad \mbox{on}\quad \partial_+\Omega,\\
& v_y=0,\quad \mbox{on}\quad \partial_0\Omega,
\end{split}
\end{equation}
and the estimate
\begin{equation}\label{eq:LocalEstimatev}
\norm{v}_{C^{2, \gamma}(\overline \Omega\cap\set{x\in[0, 2/3]})}\le \bar{C}\norm{\varphi}_{C^{2, \gamma}([0,1])},
\end{equation}
induces \eqref{eq:LocalEstimate}.

To show above, for fixed $x_0\in(0, 2/3]$, we let $P_{x_0}(x, y)$ be the following quadratic polynomial
\begin{equation}\label{eq:px0}
\begin{split}
&P_{x_0}(x,y):=Np_{x_0}(x,y)\\
&:=N((f(x_0)^2-y^2)+(f^2)'(x_0)(x-x_0)+\frac 12(f^2)''(x_0)(x-x_0)^2)
\end{split}
\end{equation}
where $N=-\frac{\mathcal L (\varphi)(x_0, f(x_0))}{\mathcal L(p_{x_0})(x_0, f(x_0))}$.  Then 
\begin{equation*}
\begin{split}
-\mathcal LP_{x_0}(x_0, f(x_0))&=-\mathcal L(\varphi)(x_0, f(x_0))\\
&=-A(x_0, f(x_0))\varphi''(x_0)-D(x_0, f(x_0))\varphi'(x_0),
\end{split}
\end{equation*}
and
\begin{equation}\label{eq:Lpx0}
\begin{split}
\mathcal L(p_{x_0})&=2A(x, y)(f''(x_0)f(x_0)+(f'(x_0))^2)-2C(x, y)\\
&\quad+D(x, y)\qnt{2f'(x_0)f(x_0)+2(f''(x_0)f(x_0)+(f'(x_0))^2)(x-x_0)}\\
&\quad-2E(x, y)y\\
&\in [-10\Lambda, -\lambda]
\end{split}
\end{equation}
provided $(x, y)\in\Omega$ and $\sigma_0<<1$.
By Taylor's expansion, one knows 
\begin{equation}\label{eq:PfEstimates}
\begin{split}
\abs{\qnt{v-P_{x_0}}(x, f(x))}&=\abs{P_{x_0}(x, f(x))}\\
&\le \bar C\|\varphi\|_{C^{2,\gamma}([0,1])}\norm{f}^2_{C^{2,\gamma}([0, 1])}|x-x_0|^{2+\gamma},
\end{split}
\end{equation}
and
\begin{equation}\label{eq:LPvEstimates}
\abs{\mathcal L\qnt{v-P_{x_0}}(x, y)}\le \bar C\norm{\varphi}_{C^{2, \gamma}([0, 1])}\abs{(x, y)-(x_0, f(x_0))}^{\gamma}.
\end{equation}

Denote
\begin{equation*}
\begin{split}
&\Omega^\kappa_{x_0}:=\set{(x, y)|~x_0-\kappa f(x_0)<x<x_0+\kappa f(x_0),~ 0<y< f(x)},\\
&\partial_+\Omega^\kappa_{x_0}:=\partial_+\Omega\cap\overline{\Omega^\kappa_{x_0}},\quad\partial_0\Omega^\kappa_{x_0}:=\partial_0\Omega\cap\overline{\Omega^\kappa_{x_0}}.
\end{split}
\end{equation*}
The following estimate holds. 
\begin{lem}\label{lem:C2HEstimates}
For fixed $x_0\in[0, 2/3]$, let $u$ be a solution of \eqref{eq:PotentialEquations} and $v, P_{x_0}$ be given as above. Then there exists $\sigma_0>0$ such that for any $\sigma\in (0,\sigma_0]$, one has 
\begin{equation}\label{eq:C2HEstimates}
\|v-P_{x_0}\|_{L^\infty(\Omega^1_{x_0})}\le  \bar C\norm{\varphi}_{C^{2,\gamma}([0, 1])}f(x_0)^{2+\gamma}
\end{equation}
where the two positive constants $\sigma_0,\bar C$ depend only on $\gamma,\lambda,\Lambda$. 
\end{lem}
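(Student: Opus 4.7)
I would first isolate the favorable structure of $v-P_{x_0}$ that the polynomial $P_{x_0}$ was engineered to produce. Since $(P_{x_0})_y=-2Ny$ vanishes on $\{y=0\}$, $v-P_{x_0}$ inherits the homogeneous Neumann condition on $\partial_0\Omega$. On $\partial_+\Omega$ the Dirichlet trace equals $-P_{x_0}(x,f(x))$, and the identity
\[
p_{x_0}(x,f(x))=-\bigl[f(x)^2-T^{x_0}_2[f^2](x)\bigr]
\]
combined with $\|f^2\|_{C^{2,\gamma}}\le\bar C\|f\|_{C^{2,\gamma}}^2$ turns \eqref{eq:PfEstimates} (with $|x-x_0|\le h$) into $|v-P_{x_0}|\le\bar C\|\varphi\|_{C^{2,\gamma}}h^{2+\gamma}$ on $\partial_+\Omega\cap\overline{\Omega^1_{x_0}}$, where $h:=f(x_0)$. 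By \eqref{eq:LPvEstimates}, $|\mathcal L(v-P_{x_0})|\le\bar C\|\varphi\|_{C^{2,\gamma}}h^{\gamma}$ on $\Omega^1_{x_0}$. The degenerate case $h=0$ makes $\Omega^1_{x_0}=\varnothing$, so assume $h>0$; the claim then asks for an $L^\infty$-bound at exactly the natural data scale.

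My plan is a barrier argument anchored on $p_{x_0}$ itself, exploiting $(p_{x_0})_y(x,0)=0$ and the bound $\mathcal L(p_{x_0})\le-\lambda$ from \eqref{eq:Lpx0}. Accordingly, I would test the barrier
\[
W(x,y):=\|\varphi\|_{C^{2,\gamma}}\bigl(K_1 h^{2+\gamma}+K_2 h^{\gamma}\,p_{x_0}(x,y)\bigr)
\]
with suitable $K_1,K_2>0$. Then $W_y(x,0)=0$ matches the Neumann condition, $\mathcal LW\le-\lambda K_2\|\varphi\|_{C^{2,\gamma}}h^{\gamma}$, and choosing $K_2\ge\bar C/\lambda$ yields $\mathcal L\bigl(W\pm(v-P_{x_0})\bigr)\le 0$ on $\Omega^1_{x_0}$. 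Using $|p_{x_0}(x,f(x))|\le\bar C\sigma^2 h^{2+\gamma}$ on the top, a suitable choice of $K_1$ (of order $\sigma^2$ for small $\sigma_0$) ensures $W\ge|v-P_{x_0}|$ on $\partial_+\Omega\cap\overline{\Omega^1_{x_0}}$. Thus the two candidate supersolutions $W\pm(v-P_{x_0})$ are nonnegative on the top and have vanishing normal derivative on the bottom.

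The main obstacle is that the two interior vertical sides $\{x=x_0\pm h\}$ of $\Omega^1_{x_0}$ lie strictly inside $\Omega$ and carry no prescribed data, so the classical maximum principle on $\Omega^1_{x_0}$ does not immediately close. To absorb this, I would rescale via $(X,Y)=((x-x_0)/h,\,y/h)$; the straight corner condition \eqref{eq:OmegaCornerCondition} makes $f$ linear on $[0,3/4]$, so $\Omega^\kappa_{x_0}$ maps to the $x_0$-independent trapezoid $\tilde\Omega^\kappa=\{|X|<\kappa,\,0<Y<1+f'(0)X\}$ for any fixed $\kappa$ once $\sigma_0$ is small. I would then run a contradiction/compactness scheme: assuming the lemma fails along a sequence of data, I normalize the rescaled functions $w_n=h_n^{-(2+\gamma)}\|\varphi_n\|_{C^{2,\gamma}}^{-1}(v_n-P_{x_{0,n}})$ further to unit $L^\infty$-norm, so that the scaled RHS, top Dirichlet trace, and bottom Neumann trace all vanish in the limit. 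Uniform Schauder estimates for mixed Dirichlet--Neumann problems on the trapezoidal domain (applicable thanks to the $x_0$-independent shape) produce a $C^0_{\mathrm{loc}}$-limit $w_\infty$ that is a bounded elliptic solution with zero top Dirichlet, zero bottom Neumann, and zero right-hand side. A strip-type Phragm\'en--Lindel\"of/uniqueness argument then forces $w_\infty\equiv 0$, contradicting unit normalization. I expect this final uniqueness step to be the main technical hurdle; it depends crucially on the uniform trapezoidal shape granted by \eqref{eq:OmegaCornerCondition} and on the clean alignment between the Neumann and Dirichlet parts of the boundary along the rescaled corner.
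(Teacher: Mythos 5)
Your local barrier $W=\norm{\varphi}\bigl(K_1h^{2+\gamma}+K_2h^{\gamma}p_{x_0}\bigr)$ is sound as far as it goes, but the obstacle you yourself flag --- the data-free lateral sides $\{x=x_0\pm f(x_0)\}$ of $\Omega^1_{x_0}$ --- is not closed by your rescaling/compactness scheme, and this is a genuine gap. The quantity $\|v-P_{x_0}\|_{L^\infty(\Omega^1_{x_0})}$ is not determined by the data on $\overline{\Omega^1_{x_0}}$ alone, so no argument confined to that set (barrier or blow-up) can succeed: after you normalize $w_n$ to unit sup-norm on the fixed trapezoid, the supremum may concentrate at the lateral sides, where you have no convergence and no boundary condition, so the limit $w_\infty$ can vanish identically without any contradiction; and there is no Phragm\'en--Lindel\"of principle on a \emph{bounded} trapezoid two of whose sides carry no data (that tool lives on infinite strips for solutions with controlled growth, which is precisely what is not yet known here). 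Nor can you patch the sides crudely: the global maximum principle only gives $|v-P_{x_0}|\le\bar C\norm{\varphi}_{C^{2,\gamma}}$ there, which is off by the factor $f(x_0)^{-(2+\gamma)}$ from what your comparison on $\Omega^1_{x_0}$ would require.

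The paper's proof removes the lateral boundary altogether by building a single barrier valid on \emph{all} of $\Omega$. In polar coordinates about the corner $(0,0)$ it sets $H=f(x_0)^{2+\gamma}\bigl((r/r_0)^{|\alpha|}+(r/r_0)^{-|\alpha|}\bigr)\cos(k\theta)$ with $r_0=r(x_0,f(x_0))$, $k=\pi/(8f'(0))$ and $|\alpha|=k/M$, $M=100\Lambda/\lambda$. Evenness in $y$ gives $H_y(x,0)=0$ (matching the Neumann condition), ellipticity in the thin angular sector $|\theta|\le f'(0)$ gives $-\mathcal LH>0$, and --- this is the key point your proposal lacks --- the huge exponent $|\alpha|\sim 1/f'(0)$ makes $H$ grow so fast in $|x-x_0|$ that $-\mathcal LH\ge\bar C|(x,y)-(x_0,f(x_0))|^{\gamma}$ and $H\ge\bar C|x-x_0|^{2+\gamma}$ hold on the whole of $\Omega$ and $\partial_+\Omega$, dominating \eqref{eq:LPvEstimates} and \eqref{eq:PfEstimates} globally. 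The maximum principle is then applied on $\Omega$ itself, whose boundary is only $\partial_+\Omega\cup\partial_0\Omega$, so no artificial sides appear; restricting the resulting bound $|v-P_{x_0}|\le\bar C\norm{\varphi}_{C^{2,\gamma}}H$ to $\Omega^1_{x_0}$, where $r/r_0$ is close to $1$ and hence $H\le\bar Cf(x_0)^{2+\gamma}$, yields \eqref{eq:C2HEstimates}. To repair your proof you would need to supply such a global growth mechanism; the local trapezoid analysis cannot replace it.
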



\begin{proof}
Introduce the polar coordinate
\begin{equation*}
r(x, y)=\sqrt{x^2+y^2},\quad \theta(x, y)=\arctan\frac{y}{x}. 
\end{equation*}
Consider the corner barrier function $Y$ as follows
\begin{equation*}
Y:=r^{-\alpha}\cos\qnt{k\theta}.
\end{equation*}
$\alpha, k$ are two constants which will be fixed later. Since $Y$ is even in $y$, one has
\begin{equation}\label{eq:YyVanish}
\begin{split}
Y_y(x,0)=0.
\end{split}
\end{equation}
A direct computation yields that
\begin{equation}\label{eq:LYExpression}
\begin{split}
\mathcal LY&=r^{-\alpha-2}\bigg(\alpha(\alpha+1)\bracket{A\cos^2\theta+B\sin\theta\cos\theta+C\sin^2\theta}\cos(k\theta)\\
&\quad+\alpha k\bracket{(C-A)\sin2\theta+B\cos2\theta}\sin(k\theta)\\
&\quad- k^2\bracket{A\sin^2\theta-B\sin\theta\cos\theta+C\cos^2\theta}\cos(k\theta)\\
&\quad- \alpha\bracket{(A\sin^2\theta-B\sin\theta\cos\theta+C\cos^2\theta)+Dr\cos\theta+Er\sin\theta}\cos(k\theta)\\
&\quad-k\bracket{(A-C)\sin2\theta+B\cos2\theta-Dr\sin\theta+Er\cos\theta}\sin(k\theta)\bigg).
\end{split}
\end{equation}

Let 
\begin{equation}\label{eq:kalphaDefinition}
k=\frac{\pi}{8f'(0)},\quad\mbox{and}\quad\alpha=\pm\frac{k}{M},
\end{equation}
for some positive constant $M$ large to be determined later. 
Then we have 
\begin{equation*}
\begin{split}
\mathcal LY&\leq r^{-\alpha-2}\qnt{\frac{2k^2}{M^2}2\Lambda+\frac{2k^2}{M}\Lambda- k^2\lambda\frac{1}{4}+\frac{2k}{M}\Lambda+k\Lambda+10kB+\frac{10kB}{M}},
\end{split}
\end{equation*}
provided $0<f'(0)\leq\sigma_0<<1$ and $\abs{\theta}\leq f'(0)$.

Let $M=\frac{100\Lambda}{\lambda}$. Then we have
\begin{equation}\label{eq:LYEstimates}
-\mathcal LY\geq \frac{\lambda k^2}{8}r^{-\alpha-2},
\end{equation}
provided $k>>1$.

Denote
\begin{equation}\label{eq:r0}
r_0=r(x_0, f(x_0))=\sqrt{1+\qnt{f'(0)}^2}\frac{f(x_0)}{f'(0)}\approx \frac{f(x_0)}{f'(0)}.
\end{equation}
Introduce barrier function $H$ as follows
\begin{equation*}
H:=f(x_0)^{2+\gamma}\qnt{\qnt{\frac{r}{r_0}}^{|\alpha|}+\qnt{\frac{r}{r_0}}^{-|\alpha|}}\cos\qnt{k\theta}.
\end{equation*} 
Then, by \eqref{eq:LYEstimates} and \eqref{eq:YyVanish}, we have
\begin{equation}\label{eq:LHEstimates}
\begin{split}
-\mathcal LH&\geq f(x_0)^{2+\gamma}\frac{\lambda k^2}{8}r_0^{-2}\qnt{\qnt{\frac{r}{r_0}}^{|\alpha|-2}+\qnt{\frac{r}{r_0}}^{-|\alpha|-2}}\\
&\geq f(x_0)^{\gamma}\frac{\lambda }{16}\qnt{\qnt{\frac{r}{r_0}}^{|\alpha|-2}+\qnt{\frac{r}{r_0}}^{-|\alpha|-2}},
\end{split}
\end{equation}
and
\begin{equation*}
H_y(x, 0)=0.
\end{equation*}

First, we estimate $-\mathcal LH$.
\begin{itemize}
\item[(1).] $\abs{x-x_0} \le f(x_0)$. In this case, by \eqref{eq:fEstimates} and Taylor's expansion, one knows 
\begin{equation*}
\abs{\frac{r}{r_0}-1}\le \sigma_0.
\end{equation*}
So \eqref{eq:LHEstimates} leads to
\begin{equation*}
\begin{split}
-\mathcal L H&\geq \frac{\lambda }{16}y_0^{\gamma}\left(\frac{r}{r_0}\right)^{-2}\bracket{\left(\frac{r}{r_0}\right)^{\alpha}+\left(\frac{r}{r_0}\right)^{-\alpha}}\\
&\geq \frac{\lambda }{16}y_0^{\gamma} \geq \bar C\abs{(x, y)-(x_0, f(x_0))}^{\gamma}.
\end{split}
\end{equation*}
\item[(2).] $f(x_0)\le x-x_0 $. In this case, by \eqref{eq:fEstimates} and Taylor's  expansion, one knows 
\begin{equation*}
r-r_0\geq\frac{1}{2} (x-x_0)
\end{equation*}
then
\begin{equation*}
\frac{r}{r_0}\ge 1+\frac{1}{2} \frac{x-x_0}{r_0}.
\end{equation*}
Hence, by \eqref{eq:kalphaDefinition} and \eqref{eq:r0}, we have
\begin{equation*}
\begin{split}
\left(\frac{r}{r_0}\right)^{\qnt{\frac{k}{M}-2}\frac{1}{\gamma}}&\ge\left (1+\frac{1}{2} \frac{x-x_0}{r_0}\right)^{\qnt{\frac{k}{M}-2}\frac{1}{\gamma}}\\
&\ge 1+\frac{1}{2} \frac{x-x_0}{r_0}\qnt{\frac{k}{M}-2}\frac{1}{\gamma}\\
&\geq 1+\delta_0\frac{x-x_0}{f(x_0)}
\end{split}
\end{equation*}
for a positive constant $\delta_0$ depend on $\lambda, \Lambda$ and $\gamma$. Hence
\begin{equation*}
\begin{split}
-\mathcal LH\ge \bar C f(x_0)^\gamma\qnt{1+\delta_0 \frac{x-x_0}{f(x_0)}}^\gamma\geq \bar C f(x_0)^\gamma\qnt{\delta_0 \frac{x-x_0}{f(x_0)}}^\gamma\ge \bar C |x-x_0|^\gamma. 
\end{split}
\end{equation*}
\item[(3).] For the case $-f(x_0)\geq x-x_0 $, as above, we have
\begin{equation*}
r-r_0\leq \frac{1}{2} (x-x_0)
\end{equation*}
then
\begin{equation*}
\frac{r}{r_0}\leq 1+\frac{1}{2} \frac{x-x_0}{r_0}<1.
\end{equation*}
Hence
\begin{equation*}
\begin{split}
\left(\frac{r}{r_0}\right)^{\qnt{-\frac{k}{M}-2}\frac{1}{\gamma}}&\ge\left (1+\frac{1}{2} \frac{x-x_0}{r_0}\right)^{\qnt{-\frac{k}{M}-2}\frac{1}{\gamma}}\\
&\ge 1+\frac{1}{2} \frac{x-x_0}{r_0}\qnt{-\frac{k}{M}-2}\frac{1}{\gamma}\\
&\geq 1+\delta_0 \frac{x_0-x}{f(x_0)},
\end{split}
\end{equation*}
and
\begin{equation*}
-\mathcal LH\ge \bar C f(x_0)^\gamma\left(1+\delta_0 \frac{x_0-x}{f(x_0)}\right)^\gamma\geq \bar C f(x_0)^\gamma\left(\delta_0 \frac{x_0-x}{f(x_0)}\right)^\gamma\ge \bar C |x-x_0|^\gamma. 
\end{equation*}
\end{itemize}

Similar arguments also imply on $\partial_+\Omega$
\begin{equation*}
H\ge \bar C |x-x_0|^{2+\gamma}.
\end{equation*}

By \eqref{eq:GeneralDirichletCondition0}, \eqref{eq:PfEstimates} and \eqref{eq:LPvEstimates}, applying standard maximum principle, we have
\begin{equation}\label{eq:vPH}
\abs{v(x, y)-P_{x_0}(x, y)}\leq \bar C\norm{\varphi}_{C^{2, \gamma}([0,1])}H(x, y).
\end{equation}
For $(x, y)\in\Omega_{x_0}^1$, we have
\begin{equation*}
\abs{\frac{r}{r_0}-1}\leq 3f'(0).
\end{equation*}
Then
\begin{equation*}
\begin{split}
H&\leq f(x_0)^{2+\gamma}\abs{\left(\frac{r}{r_0}\right)^{|\alpha|}+\left(\frac{r}{r_0}\right)^{-|\alpha|}}\\
&\leq 2f(x_0)^{2+\gamma}\qnt{1+3f'(0)}^{\frac{\pi}{8Mf'(0)}}\\
&\leq \bar C f(x_0)^{2+\gamma}.
\end{split}
\end{equation*}
Thus, by \eqref{eq:vPH}, we complete our proof.
\end{proof}

\begin{rem}\label{rm:DEcondition}
In the construction of barrier function $H$, the condition on the coefficients $D,E$ can be weaken to 
\begin{equation*}
r|D|+r|E|\le \Lambda.
\end{equation*}
\end{rem}
\begin{proof}[{\bf Proof of Theorem \ref{thm:LocalEstimate}}]
For $x_0\in[0, 2/3]$, consider the scaling domains as follows
\begin{equation}\label{eq:coordinatechange}
\begin{split}
&\widetilde {\Omega}^\kappa_{x_0}:=\set{(\tilde {x}, \tilde {y})|~(\tilde {x}, \tilde {y})=\frac{1}{ f(x_0)}(x-x_0, y),~(x, y)\in\Omega^\kappa_{x_0}},\\
&\partial_+\widetilde {\Omega}^\kappa_{x_0}:=\set{(\tilde {x}, \tilde {y})|~(\tilde {x}, \tilde {y})=\frac{1}{ f(x_0)}(x-x_0, y),~(x, y)\in\partial_+\Omega^\kappa_{x_0}},\\
&\partial_0\widetilde {\Omega}^\kappa_{x_0}:=\set{(\tilde {x}, \tilde {y})|~(\tilde {x}, \tilde {y})=\frac{1}{ f(x_0)}(x-x_0, y),~(x, y)\in\partial_0\Omega^\kappa_{x_0}}.
\end{split}
\end{equation}
Denote 
$$w(x,y)=(v-P_{x_0})(x,y),\quad \tilde w(\tilde x,\tilde y)=\frac{w(x,y)}{(f(x_0))^{2+\gamma}}.$$
Then $\tilde w$ solves 
\begin{equation*}
\begin{split}
\tilde {\mathcal L}\tilde w=-\frac{\mathcal L\varphi(x, y)-\mathcal L\varphi(x_0, y_0)}{(f(x_0))^\gamma}=:\tilde g(\tilde x, \tilde y)
\end{split}
\end{equation*}
where 
\begin{equation*}
\tilde {\mathcal L}=A\partial_{\tilde x\tilde x}+B\partial_{\tilde x\tilde y}+C\partial_{\tilde y\tilde y}+f(x_0)(D\partial_{\tilde x}+E\partial_{\tilde y}).
\end{equation*}
Since 
\begin{equation*}
(-1,1)\times (-1/2,1/2)\subset \tilde \Omega_{x_0}^1\subset (-1,1)\times (0,2),\quad \|\partial_+ \Omega_{x_0}^1\|_{C^{2,\gamma}}\le \bar C,
\end{equation*} 
by the standard  Schauder estimates, we have  that
\begin{equation}\label{eq:HEstimateStandard}
\begin{split}
\norm{\tilde w}_{C^{2, \gamma}(\widetilde {\Omega}^{1/2}_{x_0})}\leq \bar C\Big(\|\tilde g\|_{C^\gamma(\widetilde {\Omega}^1_{x_0})}+\norm{\frac{P_{x_0}}{(f(x_0))^{2+\gamma}}}_{C^{2, \gamma}(\partial_+\widetilde {\Omega}^1_{x_0})}+\norm{\tilde w}_{C^0(\widetilde {\Omega}^1_{x_0})}\Big),
\end{split}
\end{equation}
where the constant $\bar C$ depends only on $\gamma,\lambda,\Lambda$. It is easy to see that 
\begin{equation*}
\|\tilde g\|_{C^\gamma(\widetilde {\Omega}^1_{x_0})}\le \bar C \|\varphi\|_{C^{2,\gamma}([0,1])}.
\end{equation*}
By \eqref{eq:PfEstimates}, one knows 
\begin{equation*}
\norm{\frac{P_{x_0}}{(f(x_0))^{2+\gamma}}}_{L^\infty(\partial_+\widetilde {\Omega}^1_{x_0})}\le \bar C\|\varphi\|_{C^{2,\gamma}([0,1])}.
\end{equation*}
By a direct computation, one also has
\begin{equation*}
\frac{\dif^2}{\dif x^2}P_{x_0}(x,f(x))=-N(f'(x)^2-f'(x_0)^2+f''(x)f(x)-f''(x_0)f(x_0))\in C^{\gamma}([0,1]).
\end{equation*}
By Lemma \ref{lem:C2HEstimates} and the above estimates, \eqref{eq:HEstimateStandard} implies  
\begin{equation*}
\norm{\tilde w}_{C^{2, \gamma}(\widetilde {\Omega}^{1/2}_{x_0})}\leq \bar{C}\|\varphi\|_{C^{2,\gamma}([0,1])}.
\end{equation*}
In the coordinate $(x, y)$, since $x_0$ can be arbitrary in $(0,2/3)$, above implies 
\begin{equation*}
\begin{split}
\sup_{\substack{(x_1,y_1),(x_2,y_2)\in \bar \Omega\cap \{x\in[0,2/3]\}\\
|x_1-x_2|\le \frac 12\max(f(x_1),f(x_2))}} &\frac{|D^2 u(x_1,y_1)-D^2 u(x_2,y_2)|}{|(x_1,y_1)-(x_2,y_2)|^{\gamma}}\\
+&\|u\|_{C^2(\bar \Omega\cap \{x\in [0,2/3]\})}\le  \bar{C}\|\varphi\|_{C^{2,\gamma}([0,1])}.
\end{split}
\end{equation*}
Consider the following two points
\begin{equation*}
(x_1,y_1),(x_2,y_2)\in \bar \Omega\cap \{x\in [0,2/3]\},\quad |x_1-x_2|\ge \frac 12\max(f(x_1),f(x_2)).
\end{equation*}
  Since $u_y(x,0)=0$, there holds 
\begin{equation*}
\begin{split}
&|u_{xy}(x_1,y_1)-u_{xy}(x_2,y_2)|\\
\le &|u_{xy}(x_1,0)-u_{xy}(x_1,y_1)|+|u_{xy}(x_2,0)-u_{xy}(x_2,y_2)|+|u_{xy}(x_1,0)-u_{xy}(x_2,0)|\\
\le & \bar C\|\varphi\|_{C^{2,\gamma}([0,1])}(y_1^\gamma+y_2^\gamma)\le \bar C\|\varphi\|_{C^{2,\gamma}([0,1])}|(x_1,y_1)-(x_2,y_2)|^\gamma.
\end{split}
\end{equation*}
This implies 
\begin{equation*}
\|u_{xy}\|_{C^{2,\gamma}(\bar \Omega\cap \{x\in [0,2/3]\})}\le \bar{C}\|\varphi\|_{C^{2,\gamma}([0,1])}.
\end{equation*}

Notice that
\begin{equation*}
\begin{split}
u_{xx}+(f'(x))^2u_{yy}=&\frac{\dif^2 u(x,f(x))}{\dif x^2}-2f'(x)u_{xy}-f''(x)u_y\\
=&\frac{\dif^2 \varphi(x)}{\dif x^2}-2f'(x)u_{xy}-f''(x)u_y,
\end{split}
\end{equation*}
and \eqref{eq:PotentialEquations}
\begin{equation*}
Au_{xx}+Cu_{yy}=-Bu_{xy}-Du_x-Eu_y.
\end{equation*}
Hence for $\sigma_0$ small, we can solve $u_{xx}$ and $u_{yy}$ from above two to have
\begin{equation*}
\|u_{xx}\|_{C^{\gamma}(\partial_+\Omega\cap \{0\le x\le 2/3\})}+\|u_{yy}\|_{C^{\gamma}(\partial_+\Omega\cap \{0\le x\le 2/3\})}\le \bar C\|\varphi\|_{C^{2,\gamma}([0,1])}.
\end{equation*}
Then a similar argument as $u_{xy}$ yields that 
\begin{equation*}
\|u_{xx}\|_{C^{\gamma}(\bar\Omega\cap \{0\le x\le 2/3\})}+\|u_{yy}\|_{C^{\gamma}(\bar \Omega\cap \{0\le x\le 2/3\})}\le \bar C\|\varphi\|_{C^{2,\gamma}([0,1])}.
\end{equation*}
This ends the proof for Theorem \ref{thm:LocalEstimate}.
\end{proof}

\section{General Case}
In this section, we remove the straight corner condition and the restriction $G(x)=0$ to complete the proof of Theorem \ref{thm:GlobalEstimates}.

\begin{proof}[Proof of Theorem \ref{thm:GlobalEstimates}]
Theorem \ref{thm:GlobalEstimates} can be reduced to the situation discussed in Theorem \ref{thm:LocalEstimate} through three coordinate transformations.

(1) Change the oblique derivative condition to Neumann boundary condition. Define the trajectory $x(y;s)$ by  
\begin{equation*}
\begin{cases}
\frac{\dif x(y; s)}{\dif y}=G(x),\\
x(0; s)=s,
\end{cases}
\end{equation*}
Introduce a new coordinate $(s,z)$ by
\begin{equation}\label{transformp1}
\mathcal{P}_1:
\begin{cases}
x=x(z;s),\\
y=z.
\end{cases}
\end{equation}
Since
\begin{equation*}
\begin{pmatrix}
x_s& x_z\\
y_s& y_z
\end{pmatrix}\Bigg|_{y=0}
=
\begin{pmatrix}
1& G(x)\\
0&1
\end{pmatrix},
\end{equation*}
for $\sigma_0$ small, the map 
\begin{equation*}
\begin{split}
\mathcal P_1^{-1}:\	&\	\	\overline{\Omega}\	\	\	\rightarrow \mathcal P_1^{-1}(\overline{\Omega})\\
 &(x,y)\rightarrow \	(s,z)
\end{split}
\end{equation*}
is a $C^{2,\gamma}$-diffeomorphism around $[0, 1]\times\set{0}$ and $\mathcal P_1^{-1}([0,1]\times \{0\})=[0,1]\times \{0\}.$
It remains to consider the image of $\partial_+\Omega$, i.e. $(s,f(x(z;s)))$. 
By the coordinate transformation, we have 
\begin{equation*}
s=x(z;s)-\int_0^z G(x(\lambda;s))d\lambda.
\end{equation*}
Hence on the image of $\partial_+\Omega$, we have
\begin{equation*}
s=x-\int_0^{f(x)} G(x(\lambda;s))d\lambda.
\end{equation*}
This implies $s'(x)=1-f'(x)G(x)>0$ for $x\in [0,1]$ provided $\sigma_0$ is small. From this, we know $x\rightarrow s$ is a $C^{2,\gamma}$-diffeomorphism from $[0,1]$ to $[0,1]$, i.e. the image  $\mathcal P_1^{-1}(\partial_+\Omega)$ can be written by 
\begin{equation*}
z=f(x(z;s))=\hat f(s).
\end{equation*}
By a direct computation, the  oblique derivative condition of \eqref{eq:PotentialEquations} becomes
\begin{equation*}
u_z=0,~\text{on}~z=0.
\end{equation*}
Since the coordinate transformation $\mathcal{P}_1$ is a diffeomorphism, we can directly verify that condition \eqref{eq:FiniteGrowCondition} is preserved, where the constant $C_f$ is updated to the new constant $\hat{C}_f$. Then, the estimates in the new coordinates naturally induce estimate \eqref{eq:ObliqueDerivativeSchauder}. Therefore, we only need to prove the conclusion for the case when $G=0$.

(2) Straighten the boundary $\partial_+\Omega=\set{(x, f(x))|x\in [0,1]}$ under the assumption $G=0$. Consider a cut off function $\chi\in C^\infty([0, 1])\rightarrow [0,1]$ with
\begin{equation}\label{eq:chiSetting}
\chi=\begin{cases}
1,~x\in[0, 3/4],\\
0,~x\in[13/16, 1],
\end{cases}
\end{equation}
and introduce the following coordinate transformation:
\begin{equation}\mathcal{P}_2:
\begin{cases}
s=x,\\
z=\frac{\chi(x) \bar{\Pi}x+(1-\chi(x))f(x)}{f(x)}y.
\end{cases}
\end{equation}
This transformation reduces the domain $\Omega$ to a straight corner domain $\mathcal P_2(\Omega)$.

(3) To get the estimate for $x\in[1/3, 1]$, we can use the coordinate transformation
\begin{equation}\mathcal{P}_3:
\begin{cases}
s=1-x,\\
z=y.
\end{cases}
\end{equation}
Thus the corner $(x, y)=(1, 0)$ is reduced to the origin $O$.

In the next, we explain how to use the above three coordinate transformations to get Theorem \ref{thm:GlobalEstimates}. 
\par The transformation $\mathcal{P}_1$ is locally determined by $G$, independent of $f$, and smooth. Thus, the estimate under the new coordinates induces the original estimate.  $\mathcal{P}_3$ is the same. The only trouble thing is the transformation $\mathcal{P}_2$. 
\par For $x_0\in \partial\Omega\cap (0, 2/3]$, let $v, P_{x_0}$ be given by \eqref{eq:v}, \eqref{eq:px0} respectively. We first need to show that the estimate \eqref{eq:C2HEstimates} holds. Denote the elliptic operator in the original coordinate $(x,y)$ by $\mathcal L_0$, in $(s,z)$ by $\mathcal L_1$. It can be checked directly that estimate \eqref{eq:PfEstimates} and \eqref{eq:LPvEstimates} hold in  $(s,z)-$coordinate. 
In fact, we need compute the Jacobi matrix for the coordinate transformation $\mathcal P_2$.

Denote
\begin{equation}\label{eq:f1Expression}
f_1(x):=\chi(x) \bar{\Pi}x+(1-\chi(x))f(x),
\end{equation}
we have
\begin{equation}\label{eq:f1ExpressionC}
\frac{f_1}{f}=1,~\mbox{for}~x\in[13/16, 1].
\end{equation}

Then
\begin{equation}\label{eq:jacobixysz}
\begin{pmatrix}
s_x&s_y\\
z_x&z_y
\end{pmatrix}
=
\begin{pmatrix}
1&0\\
\frac{f_1'f-f_1f'}{f^2}y&\frac{f_1}{f}
\end{pmatrix},
\end{equation}
\begin{equation*}
\begin{cases}
s_{xx}=s_{xy}=s_{yy}=z_{yy}=0,~z_{xy}=\frac{f_1'f-f_1f'}{f^2},\\
z_{xx}=\frac{f_1''f-f_1f''}{f^2}y+2\frac{f_1'f'f-f_1\qnt{f'}^2}{f^3}y,
\end{cases}
\end{equation*}
and
\begin{equation*}
\mathcal L_1 u=A_1u_{ss}+B_1 u_{sz}+C_1 u_{zz}+D_1 u_{s}+E_1 u_{z},
\end{equation*}
where 
\begin{equation*}
\begin{split}
& A_1=A s_x^2+Bs_xs_y+Cs_y^2,\	B_1=2As_xz_x+Bs_yz_x+Bs_xz_y+2Cs_yz_y,\\
& C_1=Az_x^2+Bz_yz_x+Cz_y^2,\	D_1=As_{xx}+Bs_{xy}+Cs_{yy}+Ds_x+Es_y,\\
& E_1=Az_{xx}+Bz_{xy}+Cz_{yy}+Dz_x+Ez_y.
\end{split}
\end{equation*}

Now we estimate the Jacobi matrix \eqref{eq:jacobixysz}. By \eqref{eq:FiniteGrowCondition}, we know
\begin{equation*}
\begin{split}
\frac{f_1}{f}=&(1-\chi(x))+\chi(x)\frac{\bar{\Pi}x}{f(x)}\\
\le & 1+C_{f},
\end{split}
\end{equation*}
and also 
\begin{equation*}
\begin{split}
\frac{f_1}{f}\ge \min \left(1, C_f\right)=1.
\end{split}
\end{equation*}
Moreover, since $y\leq f$, there holds
\begin{equation*}
\begin{split}
\left|\frac{f_1'f-f_1f'}{f^2}y\right|\le  |f_1'|+\left|\frac{f_1f'}{f}\right|
\le  C\bar \Pi+\bar \Pi \left|\frac{f_1}{f}\right|\le \bar C(1+C_f)^2 \bar \Pi<<1.
\end{split}
\end{equation*}
Above three imply 
$\begin{pmatrix}
s_x&s_y\\
z_x&z_y
\end{pmatrix}\approx \begin{pmatrix}
*&0\\
0&*
\end{pmatrix}
$ for $\bar \Pi$ small enough, i.e. $\mathcal P_2$ is a Lipschitz diffeomorphism. Hence the estimates \eqref{eq:PfEstimates} and \eqref{eq:LPvEstimates}  also hold in  $(s, z)-$coordinate.
\par The key to the proof of Lemma \ref{lem:C2HEstimates} is the existence of barrier function $H$. By Remark \ref{rm:DEcondition}, we only need to check that $\sqrt{s^2+z^2} |D_1|$ and $\sqrt{s^2+z^2} |E_1|$ are uniformly bounded. It is enough to estimate the following two terms
\begin{equation*}
\begin{split}
\left|\sqrt{s^2+z^2}\frac{f_1' f-f_1 f'}{ f^2} \right|&=\left|\sqrt{s^2+z^2}\qnt{\frac{f_1}{ f}}' \right|\\
&\le \bar{C} \left|s\frac{f_1' f-f_1 f'}{ f^2} \right|~\mbox{for}~s\in[0,3/4]\\
&=\bar{C} \left|x\frac{f_1' f-f_1 f'}{ f^2} \right|~\mbox{for}~x\in[0,3/4]\\
&\le \bar C(1+C_f)^2,
\end{split}
\end{equation*}
and
\begin{equation*}
\begin{split}
& \left|\sqrt{s^2+z^2} \left(\frac{f_1''f-f_1f''}{f^2}y-2\frac{f_1'f'f-f_1\qnt{f'}^2}{f^3}y\right)\right|\\
\le &\bar{C}\frac{s}{f}\left(|f_1''| f+|f''|f_1+|f_1'f'|+|f'|^2 \frac{f_1}{f}\right)~\mbox{for}~s\in[0,3/4]\\ \le & \bar C(1+C_f)^2,
\end{split}
\end{equation*}
where we utilize \eqref{eq:f1ExpressionC}. Above two imply $\sqrt{s^2+z^2} |D_1|$ and $\sqrt{s^2+z^2} |E_1|$ are uniformly bounded. 

From the above discussion, we can construct the barrier function $H$ as in Lemma \ref{lem:C2HEstimates} to get the desired estimate
\begin{equation*}
|(v-P_{x_0})(s_0, z_0)|\le \bar C\norm{\varphi}_{C^{2,\gamma}([0, 1])}f(s_0)^{2+\gamma},\quad \mbox{for}~s_0\in [0, 2/3].
\end{equation*}
The above estimate will give the corresponding growth estimate in the original coordinate $(x, y)$
\begin{equation}\label{eq:wgrowth2}
\|(v-P_{x_0})\|_{L^\infty(\Omega_{x_0}^{1/2})}\le \bar C\norm{\varphi}_{C^{2,\gamma}([0, 1])} f(x_0)^{2+\gamma},\quad \mbox{for}~  x_0\in [0,2/3].
\end{equation}
Once we have estimate \eqref{eq:wgrowth2}, we do the same argument as in Theorem \ref{thm:LocalEstimate} in $(x, y)$-coordinate. The proof is complete. 
\end{proof}

\section{Unbounded Dirichlet Condition}
In this section, we extend Theorem \ref{thm:GlobalEstimates} to the weighted Hölder spaces. For $m>0$, define the weighted H\"{o}lder norms as follows
\begin{equation*}
\begin{split}
\norm{\varphi}_{C^{(-m)}_{k, \gamma}([0, 1])}&=\max\limits_{i=0,...,k}\sup\limits_{x\in[0, 1]}\abs{x^{m+i}\varphi^{(i)}(x)}\\
&\quad+\sup\limits_{0\leq x_1< x_2\leq1}\abs{x_1^{m+k+\gamma}\frac{\varphi^{(k)}(x_1)-\varphi^{(k)}(x_2)}{\abs{x_1-x_2}^\gamma}},
\end{split}
\end{equation*}
\begin{equation*}
\begin{split}
\norm{u}_{C^{(-m)}_{k, \gamma}(\overline{\Omega})}&=\max\limits_{i=0,...,k}\sup\limits_{(x, y)\in\Omega}\abs{r^{m+i}\nabla^{i} u(x, y)}\\
&\quad+\sup\limits_{0\leq r_1\leq r_2\leq1}\abs{r_1^{m+k+\gamma}\frac{\nabla^{k} u(x_1, y_1)-\nabla^{k} u(x_2, y_2)}{\abs{(x_1, y_1)-(x_2, y_2)}^\gamma}},
\end{split}
\end{equation*}
where $r=\sqrt{x^2+y^2}$, $r_i=\sqrt{x_i^2+y_i^2}$, $i=1,2$, and
\begin{equation*}
\begin{split}
\norm{f}_{C^{(1+\gamma)}_{2, \gamma}([0, 1])}&=\max\limits_{i=0, 1, 2}\sup\limits_{x\in[0, 1]}\abs{xf^{(i)}(x)}+\sup\limits_{0\leq x_1< x_2\leq1}\abs{x_1\frac{f^{(2)}(x_1)-f^{(2)}(x_2)}{\abs{x_1-x_2}^\gamma}}.
\end{split}
\end{equation*}
From Lemma 2.1 in \cite{GilbargHormander}, one also knows 
\begin{equation}\label{eq:h-embed}
\sup_{x\in[0,1]}|x^{1-\gamma}f''(x)|+\|f\|_{C^{1,\gamma}([0,1])}\le \bar{C}\norm{f}_{C^{(1+\gamma)}_{2, \gamma}([0, 1])}. 
\end{equation}
We also replace \eqref{eq:fEstimates} and \eqref{eq:FiniteGrowCondition} by the following conditions: 
\begin{equation}\label{eq:fEstimatesW}
\abs{f}_{C^{(1+\gamma)}_{2, \gamma}([0, 1])}\le \sigma,
\end{equation}
and
\begin{equation}\label{eq:FiniteGrowConditionW}
\begin{cases}
\inf_{x\in[0, 1]}\abs{\frac{f(x)}{\sin(\pi x)}}=\Pi>0,\\
\norm{f}_{C^{(1+\gamma)}_{2, \gamma}([0, 1])}=C_f\Pi=\bar{\Pi}.
\end{cases}
\end{equation}
Then, we have
\begin{thm}\label{thm:GlobalEstimatesW}
Let $u$ be the solution of \eqref{eq:PotentialEquations}. Assume that $A, B, C, D, E$ and $G$ satisfy \eqref{eq:uniformelliptic} and $f$ satisfies \eqref{eq:fEstimatesW} and \eqref{eq:FiniteGrowConditionW}. Then there exists $\sigma_0>0$ such that for any $ \sigma\in (0,\sigma_0]$, we have 
\begin{equation}\label{eq:ObliqueDerivativeSchauderW}
\norm{u}_{C_{2, \gamma}^{(-m)}(\Omega)}\le \bar{C}\norm{\varphi}_{C_{2, \gamma}^{(-m)}([0,1])},
\end{equation}
where the two positive constants $\sigma_0, \bar{C}$ depend only on $\gamma, \lambda, \Lambda, C_f, m$.
\end{thm}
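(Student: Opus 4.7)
I would follow the three-step structure of the proofs of Theorems~\ref{thm:LocalEstimate} and~\ref{thm:GlobalEstimates}---reduction to a locally straight corner, a growth estimate on $v-P_{x_0}$ via a corner barrier, and a rescaling+Schauder argument---with each estimate reweighted by the appropriate power of $x_0$ dictated by the weighted norm. First I would apply the coordinate transformations $\mathcal P_1,\mathcal P_2,\mathcal P_3$ of Section~3 to reduce to $G=0$ with $\partial_+\Omega$ locally straight near $x=0$ (the corner $x=1$ being handled symmetrically). Although $f$ now lies only in the weighted class $C^{(1+\gamma)}_{2,\gamma}$, the embedding \eqref{eq:h-embed} still yields $\|f\|_{C^{1,\gamma}([0,1])}\le\bar C\sigma$, so $\mathcal P_2$ remains a Lipschitz diffeomorphism, the Jacobian computations in Section~3 carry over, and the weakened coefficient bound $r(|D_1|+|E_1|)\le\bar C$ of Remark~\ref{rm:DEcondition} is preserved.

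The main new ingredient is a weighted analogue of Lemma~\ref{lem:C2HEstimates}. For $x_0\in(0,2/3]$ let $P_{x_0}$ be the polynomial from \eqref{eq:px0}; the weighted hypothesis on $\varphi$ gives $|N|\le\bar C\|\varphi\|_{C^{(-m)}_{2,\gamma}}x_0^{-m-2}$ and hence
\[
|P_{x_0}(x,f(x))|\le\bar C\|\varphi\|_{C^{(-m)}_{2,\gamma}}x_0^{-m-2}|x-x_0|^{2+\gamma}\quad\text{on }\partial_+\Omega_{x_0}^1,
\]
\[
|\mathcal L(v-P_{x_0})(x,y)|\le\bar C\|\varphi\|_{C^{(-m)}_{2,\gamma}}x_0^{-m-2-\gamma}|(x,y)-(x_0,f(x_0))|^\gamma\quad\text{in }\Omega_{x_0}^1,
\]
the extra factor $x_0^{-\gamma}$ in the second inequality coming from the weighted Hölder seminorm of $\varphi''$. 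Multiplying the barrier $H$ of Lemma~\ref{lem:C2HEstimates} by $\|\varphi\|_{C^{(-m)}_{2,\gamma}}x_0^{-m-2-\gamma}$, the three-case analysis of $-\mathcal L H$ in $\Omega$ and $H|_{\partial_+\Omega}$ carries through unchanged (using $x_0\le 1$), and the maximum principle then yields
\[
\|v-P_{x_0}\|_{L^\infty(\Omega_{x_0}^1)}\le\bar C\|\varphi\|_{C^{(-m)}_{2,\gamma}}x_0^{-m-2-\gamma}f(x_0)^{2+\gamma}.
\]

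The closing argument follows the scaling+Schauder step of Theorem~\ref{thm:LocalEstimate} almost verbatim. Setting $\tilde w=(v-P_{x_0})/f(x_0)^{2+\gamma}$ under the rescaling \eqref{eq:coordinatechange}, standard boundary Schauder on the smooth bounded rescaled region, combined with the weighted $L^\infty$ bound above, yields $\|\tilde w\|_{C^{2,\gamma}(\widetilde\Omega_{x_0}^{1/2})}\le\bar C\|\varphi\|_{C^{(-m)}_{2,\gamma}}x_0^{-m-2-\gamma}$. Unscaling and using $f(x_0)\sim\Pi x_0$ from \eqref{eq:FiniteGrowConditionW} gives the weighted pointwise bounds $|\nabla^i u|\le\bar C\|\varphi\|_{C^{(-m)}_{2,\gamma}}x_0^{-m-i}$ for $i=0,1,2$ and the seminorm bound $[\nabla^2 u]_{C^\gamma(\Omega_{x_0}^{1/2})}\le\bar C\|\varphi\|_{C^{(-m)}_{2,\gamma}}x_0^{-m-2-\gamma}$ on each dyadic annulus. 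The weighted norm is then assembled from these single-scale estimates as in the final paragraph of the proof of Theorem~\ref{thm:LocalEstimate}: for points in the same or adjacent dyadic annuli the single-scale bound suffices, while for $(x_1,y_1),(x_2,y_2)$ with $r_1\le r_2/4$ one uses $|(x_1,y_1)-(x_2,y_2)|\ge|r_2-r_1|\ge r_2/2\ge r_1$ to combine the pointwise bounds via the triangle inequality. The principal obstacle I anticipate is verifying that the local Taylor expansion bounds on $P_{x_0}$ and on $f^2$ used in Lemma~\ref{lem:C2HEstimates} survive under the weaker $C^{(1+\gamma)}_{2,\gamma}$ regularity of $f$; this rests on the observation that $\|f^2\|_{C^{2,\gamma}([x_0/2,2x_0])}$ is bounded uniformly in $x_0$ thanks to \eqref{eq:h-embed} and \eqref{eq:FiniteGrowConditionW}.
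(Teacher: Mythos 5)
Your overall architecture matches the paper's (a weighted barrier lemma, then rescaling plus Schauder, then assembly of the weighted norm), and your observations about $\mathcal P_2$ surviving the weaker regularity and about $f''f$ being controlled via \eqref{eq:h-embed} are correct (cf.\ \eqref{eq:h001}). But there is a genuine gap at the heart of the barrier step. The comparison $|v-P_{x_0}|\le \bar C\,\|\varphi\|H$ is obtained by the maximum principle on (essentially) all of $\Omega$, not merely on $\Omega^1_{x_0}$ --- otherwise you would need data on the artificial lateral boundaries $\{x=x_0\pm f(x_0)\}$, which you do not have. On the region $x\le x_0/2$ the source term only satisfies $|\mathcal L(v-P_{x_0})|\le \bar C\norm{\varphi}_{C^{(-m)}_{2,\gamma}}\,x^{-(m+2+\gamma)}|(x,y)-(x_0,y_0)|^\gamma$ (the paper's \eqref{eq:LPvEstimatesW}); your uniform bound with $x_0^{-(m+2+\gamma)}$ is valid only for $x\ge x_0/2$, so the three-case analysis does \emph{not} ``carry through unchanged (using $x_0\le 1$)''. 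The essential new computation in the weighted setting is precisely here: for $x\le x_0/2$ one must exploit the algebraic growth $(r_0/r)^{k/(2M)}$ of the barrier toward the corner to dominate the blow-up $x^{-(m+2+\gamma)}$ of $\mathcal L\varphi$, which requires $\tfrac{k}{4M}\ge m+2+\gamma$, i.e.\ $k\sim 1/f'(0)$ sufficiently large --- this is exactly why $\sigma_0$ and $\bar C$ must depend on $m$. Your proposal never confronts this blow-up near the corner.

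A second, smaller gap: the corner $x=1$ cannot be ``handled symmetrically'' by $\mathcal P_3$. After reflection the data still blow up at the image of the original corner $x=0$, so a global comparison with a barrier centered at $(1,0)$ fails: that barrier stays bounded near $x=0$ while the source does not. The paper instead restricts the comparison to $\overline\Omega\cap\{x\in[1/3,1]\}$, uses the already-established local estimate (Theorem \ref{thm:LocalEstimateW}) to control $|v-P_{x_0}|$ on the cut $\{x=1/3\}$, and observes that $H|_{x=1/3}\ge \bar C>0$ so the barrier dominates on that artificial boundary. Without this modification the argument does not close at the right-hand corner.
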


Recall that $v=u-\varphi,P_{x_0}(x,y)=N p_{x_0}(x,y)$ are given by \eqref{eq:v} and \eqref{eq:px0}.
Since for $0<x_1<x_2<1$, from \eqref{eq:h-embed} and \eqref{eq:fEstimatesW}, we have 
\begin{equation}\label{eq:h001}
\begin{split}
&\frac{|f''(x_1)f(x_1)-f''(x_2)f(x_2)|}{|x_1-x_2|^\gamma}\\
\le & \frac{|f(x_1)|}{x_1}\left|x_1\frac{f''(x_1)-f''(x_2)}{|x_1-x_2|^\gamma}\right|+(x_2^{1-\gamma}|f''(x_2)|)\frac{|f(x_1)-f(x_2)|}{|x_1-x_2|}\frac{|x_1-x_2|^{1-\gamma}}{x_2^{1-\gamma}}\\
\le & \bar C\|f\|_{C^{(1+\gamma)}_{2,\gamma}}^2.
\end{split}
\end{equation}
Hence, by the definition of $p_{x_0}$, we have 
\begin{equation*}
\abs{p_{x_0}(x, f(x))}\leq\bar{C}\norm{f}_{C^{(1+\gamma)}_{2, \gamma}([0, 1])}^2\abs{x-x_0}^{2+\gamma}.
\end{equation*}
Also for $\sigma_0$ small enough, \eqref{eq:Lpx0} still holds in the present situation. 
Moreover, a simple calculation yields that 
\begin{equation*}
|\mathcal L(\varphi)(x_0, f(x_0))|\le \bar{C}x_0^{-\qnt{m+2}}\norm{\varphi}_{C^{(-m)}_{2, \gamma}([0, 1])}.
\end{equation*}
Then we have the estimates for $P_{x_0}(x, f(x))$ as follows
\begin{equation}\label{eq:PfEstimatesW}
\begin{split}
\abs{\qnt{v-P_{x_0}}(x, f(x))}&=\frac{\abs{\mathcal L(\varphi)(x_0, f(x_0))}}{\mathcal L(p_{x_0}(x_0,f(x_0)))}\abs{p_{x_0}(x, f(x))}\\
&\leq
\frac{\bar{C}\norm{\varphi}_{C^{(-m)}_{2, \gamma}([0, 1])}\norm{f}_{C^{(1+\gamma)}_{2, \gamma}([0, 1])}^2}{x_0^{m+2}}\abs{x-x_0}^{2+\gamma}.
\end{split}
\end{equation}
Similarly, we have 
\begin{equation}\label{eq:LPvEstimatesW}
\displaystyle
\begin{split}
\abs{\mathcal L(v-P_{x_0})}\leq 
\begin{cases}
\displaystyle\frac{\bar{C}\norm{\varphi}_{C^{(-m)}_{2, \gamma}([0, 1])}}{x^{m+2+\gamma}}|(x,y)-(x_0,y_0)|^{\gamma},~\mbox{for}~ x\leq\frac{x_0}{2};\\ \\
\displaystyle\frac{\bar{C}\norm{\varphi}_{C^{(-m)}_{2, \gamma}([0, 1])}}{x_0^{m+2+\gamma}}|(x, y)-(x_0, y_0)|^{\gamma},~\mbox{for}~ x\in[\frac{x_0}{2}, 1].
\end{cases}
\end{split}
\end{equation}

Our proof is similar to Theorem \ref{thm:GlobalEstimates}. We first prove that the estimate holds with the straight corner condition, and then obtain our theorem through three coordinate transformations.
\begin{lem}\label{lem:C2HEstimatesW}
Under the assumption of Theorem \ref{thm:GlobalEstimatesW}, there exists $\sigma_0>0$ such that for any $ \sigma\in (0,\sigma_0]$, we have 
\begin{equation}\label{eq:C2HEstimatesW}
\|v-P_{x_0}\|_{L^\infty(\Omega^1_{x_0})}\le  \bar{C}x_0^{-\qnt{m+2+\gamma}}\norm{\varphi}_{C^{(-m)}_{2, \gamma}([0, 1])}f(x_0)^{2+\gamma}
\end{equation}
where the two constants $\sigma_0,\bar{C}$ depend only on $\gamma, \lambda, \Lambda, C_f, m$.
\end{lem}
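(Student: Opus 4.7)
The proof will parallel that of Lemma \ref{lem:C2HEstimates}, replacing the unweighted corner barrier by one adapted to the weighted bounds \eqref{eq:PfEstimatesW} and \eqref{eq:LPvEstimatesW}. The essential new difficulty is that the source estimate \eqref{eq:LPvEstimatesW} has two regimes: for $x \ge x_0/2$ the singular factor is only $x_0^{-(m+2+\gamma)}$, whereas for $x \le x_0/2$ it becomes $x^{-(m+2+\gamma)}$ and so blows up as $x \to 0$; this blow-up must be compensated by a sufficiently strong growth of the barrier as $r \to 0$.

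The plan is to set
\begin{equation*}
H_W(x,y) := \bar C \norm{\varphi}_{C^{(-m)}_{2,\gamma}([0,1])} \cdot \frac{f(x_0)^{2+\gamma}}{x_0^{m+2+\gamma}} \left(\left(\frac{r}{r_0}\right)^{|\alpha|}+\left(\frac{r}{r_0}\right)^{-|\alpha|}\right)\cos(k\theta),
\end{equation*}
with $k$ as in \eqref{eq:kalphaDefinition} and $|\alpha| = k/M$, but enforcing the additional constraint $|\alpha| > m + \gamma$ on top of $M = 100\Lambda/\lambda$; this is compatible with \eqref{eq:LYEstimates} for $\sigma_0$ small enough since $k$ is then large. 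The evenness of $\cos(k\theta)$ in $y$ immediately gives $H_{W,y}(x,0) = 0$, so the Neumann condition on $\partial_0\Omega$ is satisfied.

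On $\partial_+\Omega$, the Taylor expansion argument from Lemma \ref{lem:C2HEstimates} yields $H_W(x,f(x)) \ge \bar C \norm{\varphi} x_0^{-(m+2+\gamma)} |x - x_0|^{2+\gamma}$; since $x_0^{-(m+2+\gamma)} \ge x_0^{-(m+2)}$ for $x_0 \in (0,1]$ and $\norm{f}_{C^{(1+\gamma)}_{2,\gamma}}^2 \le \sigma_0^2 \le 1$, this dominates $|P_{x_0}(x,f(x))|$ via \eqref{eq:PfEstimatesW}. For the interior, the three-case analysis of Lemma \ref{lem:C2HEstimates} carries over with the extra prefactor $\norm{\varphi} x_0^{-(m+2+\gamma)}$, yielding
\begin{equation*}
-\mathcal L H_W \ge \bar C \norm{\varphi} \cdot x_0^{-(m+2+\gamma)} \cdot |(x,y)-(x_0,y_0)|^\gamma,
\end{equation*}
which handles the first branch $x \in [x_0/2, 1]$ of \eqref{eq:LPvEstimatesW}.

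The main obstacle is the regime $x \le x_0/2$, which I will treat as follows. Under the straight corner condition one has $r \sim x$ (since $0 < y < f'(0)\, x$), $r_0 \sim x_0 \sim f(x_0)/f'(0)$, and $|(x,y)-(x_0,y_0)| \le 2 r_0$ for $r \le r_0$. Using the $(r/r_0)^{-|\alpha|}$ piece of $H_W$ together with the near-origin bound $-\mathcal L[(r/r_0)^{-|\alpha|}\cos(k\theta)] \gtrsim r_0^{-2}(r/r_0)^{-|\alpha|-2}$ coming from \eqref{eq:LYEstimates}, the required inequality
\begin{equation*}
-\mathcal L H_W \ge \bar C \norm{\varphi} x^{-(m+2+\gamma)} |(x,y)-(x_0,y_0)|^\gamma
\end{equation*}
reduces to a power comparison of the form $x_0^{|\alpha|-m-\gamma} \gtrsim r^{|\alpha|-m-\gamma}$, which holds precisely because $|\alpha| > m+\gamma$ and $r \le r_0 \sim x_0$. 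With the three conditions verified, the standard maximum principle, combined with Remark \ref{rm:DEcondition} for the contribution of the lower-order coefficients $D$ and $E$, gives $|v - P_{x_0}| \le H_W$ on $\Omega$; restricting to $\Omega^1_{x_0}$ where $|r/r_0 - 1| \le 3 f'(0)$ then bounds $H_W$ by $\bar C \norm{\varphi} x_0^{-(m+2+\gamma)} f(x_0)^{2+\gamma}$, which is exactly \eqref{eq:C2HEstimatesW}.
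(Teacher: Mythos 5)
Your barrier and overall strategy coincide with the paper's: the same weighted corner barrier $x_0^{-(m+2+\gamma)}f(x_0)^{2+\gamma}\bigl((r/r_0)^{|\alpha|}+(r/r_0)^{-|\alpha|}\bigr)\cos(k\theta)$, the same split at $x=x_0/2$ dictated by the two regimes of \eqref{eq:LPvEstimatesW}, and the same use of the maximum principle with Remark \ref{rm:DEcondition}. However, your treatment of the crucial regime $x\le x_0/2$ has a genuine gap in the constant tracking. When you insert $f(x_0)\sim f'(0)x_0$ and $r_0\sim x_0$ into $-\mathcal L H_W\gtrsim x_0^{-(m+2+\gamma)}f(x_0)^{2+\gamma}k^2r_0^{-2}(r/r_0)^{-|\alpha|-2}$ and compare with $x^{-(m+2+\gamma)}|(x,y)-(x_0,y_0)|^\gamma\sim x^{-(m+2+\gamma)}x_0^\gamma$, the ratio is not $(x_0/x)^{|\alpha|-m-\gamma}$ alone but $(f'(0))^{\gamma}(x_0/x)^{|\alpha|-m-2-\gamma}$ (up to harmless constants). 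The prefactor $(f'(0))^{\gamma}$ tends to $0$ as the domain degenerates, so the bare observation ``$|\alpha|>m+\gamma$ and $r\le r_0$'' only gives a comparison constant that collapses with $\sigma_0$, defeating the whole point of the lemma (uniformity of $\bar C$). The missing step is to use $x\le x_0/2$ to get $(x_0/x)^{|\alpha|-m-2-\gamma}\ge (3/2)^{c|\alpha|}$ together with $|\alpha|=k/M\sim 1/f'(0)$, so that the exponential in $1/f'(0)$ absorbs the algebraically vanishing factor $(f'(0))^\gamma$; this is exactly the chain $(3/2)^{k/(4\gamma M)}\ge k/(8\gamma M)$ in the paper, and it is also why the paper imposes the stronger threshold $k/(4M)\ge m+2+\gamma$ rather than $|\alpha|>m+\gamma$. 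The same issue recurs in your boundary inequality $H_W(x,f(x))\gtrsim x_0^{-(m+2+\gamma)}|x-x_0|^{2+\gamma}$ for $x\le x_0/2$.

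A secondary omission: you argue entirely under the straight corner condition ($r\sim x$, $0<y<f'(0)x$), whereas the lemma is stated under the general hypotheses of Theorem \ref{thm:GlobalEstimatesW}. You need to add the reduction to the straight corner case via the transformation $\mathcal P_2$ (checking, as in Section 3, that $\sqrt{s^2+z^2}\,|D_1|$ and $\sqrt{s^2+z^2}\,|E_1|$ remain bounded under the weighted hypotheses \eqref{eq:fEstimatesW}--\eqref{eq:FiniteGrowConditionW}), which is how the paper closes the argument.
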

\begin{proof}
We first deal with {\bf straight corner case}.
Define the barrier function  $H$ with weight as follows
\begin{equation*}
H=x_0^{-\qnt{m+2+\gamma}}f(x_0)^{2+\gamma}\qnt{\qnt{\frac{r}{r_0}}^{|\alpha|}+\qnt{\frac{r}{r_0}}^{-|\alpha|}}\cos\qnt{k\theta}
\end{equation*}
where $\alpha,k$ is given by Lemma \ref{lem:C2HEstimates}.
The proof here is similar to Lemma \ref{lem:C2HEstimates}, one only needs to take into account the impact of the weight terms. Here, we briefly list the key steps. As \eqref{eq:LHEstimates},
\begin{equation*}
\begin{split}
-\mathcal LH&\geq x_0^{-\qnt{m+2+\gamma}} f(x_0)^{2+\gamma}\frac{\lambda k^2}{8}r_0^{-2}\qnt{\qnt{\frac{r}{r_0}}^{|\alpha|-2}+\qnt{\frac{r}{r_0}}^{-|\alpha|-2}}\\
&\geq x_0^{-\qnt{m+2+\gamma}} f(x_0)^\gamma\frac{\lambda }{16}\qnt{\qnt{\frac{r}{r_0}}^{|\alpha|-2}+\qnt{\frac{r}{r_0}}^{-|\alpha|-2}}.
\end{split}
\end{equation*}
Then the discussion in Lemma \ref{lem:C2HEstimates}  also implies 
\begin{equation*}
\begin{split}
-\mathcal L H  \geq \bar{C}x_0^{-\qnt{m+2+\gamma}}\abs{(x, y)-(x_0, f(x_0))}^\gamma,\quad x\ge \frac{x_0}2.
\end{split}
\end{equation*}
It remains to estimate the case $x\le \frac{x_0}{2}$. 
Since 
\begin{equation*}
r\approx x,\quad\mbox{and}\quad r_0\approx x_0,
\end{equation*}
we may assume $\frac{r_0}{r}\ge \frac 32$. By \eqref{eq:kalphaDefinition}, it follows that 
\begin{equation*}
\begin{split}
-\mathcal LH \ge &\bar{C}x_0^{-m-2-\gamma}(f(x_0))^\gamma \left(\frac{r_0}{r}\right)^{\frac{k}{2M}}\\
\ge & \bar{C}x_0^{-m-2-\gamma}\left(\frac{r_0}{r}\right)^{\frac k{4M}}x_0^\gamma \times (f'(x_0))^\gamma \left(\frac 32\right)^{\frac{k}{4M}}\\
\ge & \bar{C}x_0^{-m-2-\gamma}\left(\frac{r_0}{r}\right)^{\frac k{4M}}x_0^\gamma \times (f'(x_0))^\gamma \qnt{\left(\frac 32\right)^{\frac{k}{4\gamma M}}}^\gamma\\
\ge & \bar{C}x_0^{-m-2-\gamma}\left(\frac{r_0}{r}\right)^{\frac k{4M}}x_0^\gamma \times (f'(x_0))^\gamma \qnt{\frac{k}{8\gamma M}}^\gamma\\
\ge & \bar{C} x^{-(m+2+\gamma)}|(x, y)-(x_0, f(x_0))|^{\gamma}
\end{split}
\end{equation*}
for $\frac{k}{4M}\geq m+2+\gamma$ and $\sigma_0$ small enough.

Overall, we have proved that 
\begin{equation*}
\begin{split}
-\mathcal L(H)\geq 
\begin{cases}
\displaystyle\bar{C} x^{-(m+2+\gamma)}|(x,y)-(x_0, f(x_0))|^{\gamma},~\mbox{for}~ x\in \left[0,\frac{x_0}{2}\right];\\\\
\displaystyle\bar{C} x_0^{-(m+2+\gamma)}\abs{(x, y)-(x_0, f(x_0))}^{\gamma},~\mbox{for}~ x\in\left[\frac{x_0}{2}, 1\right].
\end{cases}
\end{split}
\end{equation*}
Similar arguments also implies on $\partial\Omega$
\begin{equation*}
H\geq\frac{\bar{C}}{x_0^{m+2+\gamma}}\abs{x-x_0}^{2+\gamma}.
\end{equation*}
As the proof of Lemma \ref{lem:C2HEstimates}, in view of \eqref{eq:PfEstimatesW} and \eqref{eq:LPvEstimatesW}, we deduce \eqref{eq:C2HEstimatesW} via above two.

For the {\bf general case}, we apply the coordinates transformation $\mathcal{P}_2$ to get the estimate under $(s, z)$ coordinates
\begin{equation*}
\|v-P_{x_0}\|_{L^\infty(\mathcal{P}_2(\Omega^1_{x_0}))}\le\bar{C}x_0^{-\qnt{m+2+\gamma}}\norm{\varphi}_{C^{(-m)}_{2, \gamma}([0, 1])}z(x_0, f(x_0))^{2+\gamma}.
\end{equation*}
Upon careful verification, we can find that the three coordinate transformations of Theorem \ref{thm:GlobalEstimates} are still valid under the assumptions \eqref{eq:fEstimatesW} and \eqref{eq:FiniteGrowConditionW}.
\end{proof}

As the proof of Theorem \ref{thm:LocalEstimate}, we have
\begin{thm}\label{thm:LocalEstimateW}
Under the assumption of Theorem \ref{thm:GlobalEstimatesW}, there exists $\sigma_0>0$ such that for any $\sigma\in (0,\sigma_0]$, we have 
\begin{equation*}
\norm{u}_{C^{(-m)}_{2, \gamma}(\overline \Omega\cap\set{x\in[0, 2/3]})}\le \bar{C}\norm{\varphi}_{C^{(-m)}_{2, \gamma}([0,1])},
\end{equation*}
where the two positive  constants $\sigma_0,\bar C$ depend only on $\gamma,\lambda,\Lambda, C_f, m$.
\end{thm}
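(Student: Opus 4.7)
The proof parallels that of Theorem \ref{thm:LocalEstimate}, with the weighted scaling dictated by Lemma \ref{lem:C2HEstimatesW}. First I would apply the coordinate transformations $\mathcal P_1$ and $\mathcal P_2$ from the proof of Theorem \ref{thm:GlobalEstimates} to reduce to the straight corner case with $G=0$; these transformations remain admissible under \eqref{eq:fEstimatesW}--\eqref{eq:FiniteGrowConditionW} and preserve the estimates \eqref{eq:PfEstimatesW}, \eqref{eq:LPvEstimatesW}, as noted at the end of the proof of Lemma \ref{lem:C2HEstimatesW}.

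For fixed $x_0\in(0,2/3]$, I would rescale via \eqref{eq:coordinatechange} and set
\begin{equation*}
\tilde w(\tilde x,\tilde y):=\frac{x_0^{m+2+\gamma}}{f(x_0)^{2+\gamma}}(v-P_{x_0})(x_0+f(x_0)\tilde x,\,f(x_0)\tilde y).
\end{equation*}
Lemma \ref{lem:C2HEstimatesW} yields $\|\tilde w\|_{L^\infty(\widetilde\Omega_{x_0}^1)}\le \bar C\|\varphi\|_{C^{(-m)}_{2,\gamma}([0,1])}$. By \eqref{eq:h-embed} and \eqref{eq:fEstimatesW}, $f(x_0)/x_0$ is small once $\sigma_0$ is small, so $\Omega_{x_0}^1\subset\{x\in[x_0/2,3x_0/2]\}$, and the second alternative of \eqref{eq:LPvEstimatesW} gives $\|\tilde g\|_{C^\gamma(\widetilde\Omega_{x_0}^1)}\le \bar C\|\varphi\|_{C^{(-m)}_{2,\gamma}}$ for $\tilde g:=\tilde{\mathcal L}\tilde w$. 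The Dirichlet data on $\partial_+\widetilde\Omega_{x_0}^1$ is controlled by $\bar C\|\varphi\|_{C^{(-m)}_{2,\gamma}}$ in $C^{2,\gamma}(\tilde x)$: using the identity $\tfrac{d^2}{dx^2}P_{x_0}(x,f(x))=N\bigl((f^2)''(x_0)-(f^2)''(x)\bigr)$, the weighted bound \eqref{eq:h001}, and the inequality $|N|\le \bar C\|\varphi\|_{C^{(-m)}_{2,\gamma}}x_0^{-(m+2)}$ (which comes from $|\mathcal L\varphi(x_0,y_0)|\le \bar C\|\varphi\|_{C^{(-m)}_{2,\gamma}}x_0^{-(m+2)}$ together with $|\mathcal L p_{x_0}|\ge\lambda$), every derivative of the boundary data up to order two in $\tilde x$ is bounded by $\bar C\|\varphi\|_{C^{(-m)}_{2,\gamma}}$. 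The Neumann condition $\tilde w_{\tilde y}=0$ on $\partial_0\widetilde\Omega_{x_0}^1$ is inherited from $u$. Standard boundary Schauder then supplies $\|\tilde w\|_{C^{2,\gamma}(\widetilde\Omega_{x_0}^{1/2})}\le \bar C\|\varphi\|_{C^{(-m)}_{2,\gamma}}$; unscaling and adding back the contribution of $P_{x_0}$ (whose Hessian is controlled by $|N|$) produces, for every $p\in\Omega_{x_0}^{1/2}$, the pointwise weighted bounds $x^{m+i}|\nabla^i u(p)|\le \bar C\|\varphi\|_{C^{(-m)}_{2,\gamma}}$ for $i=0,1,2$ along with the weighted Hölder quotient for $D^2 u$ between any two points of $\Omega_{x_0}^{1/2}$.

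The main obstacle will be assembling these per-scale estimates into the global weighted seminorm on $\overline\Omega\cap\{x\in[0,2/3]\}$. For points $(x_1,y_1),(x_2,y_2)$ of comparable $x$-scale with $|(x_1,y_1)-(x_2,y_2)|>\tfrac12\max(f(x_1),f(x_2))$, I would adapt the closing step of Theorem \ref{thm:LocalEstimate}: since $u_y\equiv 0$ on $\partial_0\Omega$ forces $u_{xy}(\cdot,0)\equiv 0$, the bound $|u_{xy}(x_i,y_i)-u_{xy}(x_i,0)|\le \bar C\|\varphi\|_{C^{(-m)}_{2,\gamma}}\,x_0^{-(m+2+\gamma)}\,y_i^\gamma$ delivers the correctly weighted Hölder quotient for $u_{xy}$, and $u_{xx},u_{yy}$ are recovered from the PDE together with the second tangential derivative of the Dirichlet data on $\partial_+\Omega$ by solving a $2\times 2$ linear system whose invertibility follows from $\sigma_0$ being small. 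For pairs at incomparable scales ($r_1\ll r_2$), the inequality $|p_1-p_2|\gtrsim r_2\gtrsim r_1$ gives
\begin{equation*}
r_1^{m+2+\gamma}\frac{|D^2u(p_1)-D^2u(p_2)|}{|p_1-p_2|^\gamma}\le 2^\gamma r_1^{m+2}|D^2u(p_1)|+2^\gamma r_2^{m+2}|D^2u(p_2)|\le \bar C\|\varphi\|_{C^{(-m)}_{2,\gamma}},
\end{equation*}
closing the Hölder quotient via the pointwise weighted $C^2$ bound. Collecting all the cases yields the claimed estimate.
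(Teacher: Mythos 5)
Your proposal is correct and follows essentially the same route as the paper: the same weighted rescaling $\tilde w = x_0^{m+2+\gamma}f(x_0)^{-(2+\gamma)}(v-P_{x_0})$ on $\widetilde\Omega_{x_0}^1$, the $L^\infty$ input from Lemma \ref{lem:C2HEstimatesW}, interior--boundary Schauder, and then closing the global weighted Hölder quotient via $u_{xy}(\cdot,0)=0$ together with solving the $2\times2$ system from the PDE and the tangential second derivative of the Dirichlet data. Your extra details (the explicit bound on the boundary data via \eqref{eq:h001} and $|N|\lesssim x_0^{-(m+2)}\|\varphi\|_{C^{(-m)}_{2,\gamma}}$, and the separate treatment of pairs at incomparable scales) are consistent refinements of what the paper leaves implicit.
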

\begin{proof}
Denote 
\begin{equation*}
w(x,y)=(v-P_{x_0})(x,y),\quad \widetilde w(\tilde x,\tilde y)=\frac{x_0^{m+2+\gamma}w(x,y)}{(f(x_0))^{2+\gamma}}
\end{equation*}
where the coordinate transformation is given by \eqref{eq:coordinatechange}.
Then 
\begin{equation*}
\begin{split}
-\widetilde {\mathcal L}\widetilde w=&\widetilde A\frac{x_0^{m+2+\gamma}(\varphi''(x_0+f(x_0)\tilde x)-\varphi''(x_0))}{(f(x_0))^\gamma}\\
+&\widetilde D\frac{x_0^{m+2+\gamma}(\varphi'(x_0+f(x_0)\tilde x)-\varphi'(x_0))}{(f(x_0))^\gamma}=\tilde g(\tilde x),
\end{split}\quad \text{in}\quad \widetilde \Omega_{x_0}^1
\end{equation*}
with the following boundary conditions
\begin{equation*}
\begin{split}
\|\widetilde w\|_{L^\infty(\widetilde \Omega_{x_0}^1)}+\|\widetilde w\|_{C^{2,\gamma}(\partial_+\widetilde \Omega_{x_0}^1)}\le \bar{C} \|\varphi\|_{C^{-m}_{2,\gamma}([0,1])},\quad \frac{\partial \widetilde w}{\partial \tilde y}|_{\partial_0\widetilde \Omega_{x_0}^1}=0.
\end{split}
\end{equation*}
Also we know 
\begin{equation*}
\|\tilde g\|_{C^\gamma(\overline{\widetilde \Omega_{x_0}^1})}\le \bar{C}\|\varphi\|_{C^{-m}_{2,\gamma}([0,1])}.
\end{equation*}
Similarly as \eqref{eq:HEstimateStandard}, one gets
\begin{equation}\label{eq:HEstimateStandardW}
\begin{split}
& \|\widetilde w\|_{C^{2,\gamma}(\overline{\widetilde \Omega_{x_0}^{1/2}})}\\
\le &\bar{C}\left(\|\tilde g\|_{C^\gamma(\overline{\widetilde \Omega_{x_0}^1})}+\|\widetilde w\|_{C^{2,\gamma}(\partial_+\widetilde \Omega_{x_0}^1)}+\|\widetilde w\|_{L^\infty(\widetilde \Omega_{x_0}^1)}\right)\le \bar{C}\|\varphi\|_{C^{-m}_{2,\gamma}([0,1])}. 
\end{split}
\end{equation}
In the original coordinate $(x,y)$, \eqref{eq:HEstimateStandardW} implies that $\forall x_0\in (0,2/3)$ there holds
\begin{equation*}
\begin{split}
&\max\limits_{i=0,1,2}\sup\limits_{(x, y)\in\Omega_{x_0}^{1/2}}\abs{x_0^{m+i}\nabla^{i} u(x, y)}\quad\\
+&\sup\limits_{(x_1, y_1),(x_2,y_2)\in\Omega_{x_0}^{1/2}}\abs{x_0^{m+k+\gamma}\frac{\nabla^{2} u(x_1, y_1)-\nabla^{2} u(x_2, y_2)}{\abs{(x_1, y_1)-(x_2, y_2)}^\gamma}}\le \bar{C} \|\varphi\|_{C^{-m}_{2,\gamma}([0,1])}.
\end{split}
\end{equation*}
In the following, we only need to consider 
\begin{equation*}
(x_1,y_1),(x_2,y_2)\in \overline{\Omega}\cap \{x\in (0,2/3)\},\quad x_2-x_1\ge \frac 12\max(f(x_1),f(x_2)).
\end{equation*}
Since $u_y(x,0)=0$, there holds 
\begin{equation*}
\begin{split}
&|u_{xy}(x_1,y_1)-u_{xy}(x_2,y_2)|\\
\le &|u_{xy}(x_1,0)-u_{xy}(x_1,y_1)|+|u_{xy}(x_2,0)-u_{xy}(x_2,y_2)|+|u_{xy}(x_1,0)-u_{xy}(x_2,0)|\\
\le & \bar{C}x_1^{-m-2-\gamma}\|\varphi\|_{C^{(-m)}_{2,\gamma}([0,1])}(y_1^\gamma+y_2^\gamma).
\end{split}
\end{equation*}
This implies 
\begin{equation*}
\|u_{xy}\|_{C^{(-m-2)}_{0,\gamma}(\bar \Omega\cap \{x\in [0,2/3]\})}\le \bar{C}\|\varphi\|_{C^{(-m)}_{2,\gamma}([0,1])}.
\end{equation*}

Notice that
\begin{equation*}
\begin{split}
u_{xx}+(f'(x))^2u_{yy}=&\frac{\dif^2 u(x,f(x))}{\dif x^2}-2f'(x)u_{xy}-f''(x)u_y\\
=&\frac{\dif^2 \varphi(x)}{\dif x^2}-2f'(x)u_{xy}-f''(x)u_y,
\end{split}
\end{equation*}
and \eqref{eq:PotentialEquations}
\begin{equation*}
Au_{xx}+Cu_{yy}=-Bu_{xy}-Du_x-Eu_y.
\end{equation*}
Hence for $\sigma_0$ small, there holds 
\begin{equation*}
\begin{split}
&|u_{xx}(x_1,f(x_1))-u_{xx}(x_2,f(x_2))|+|u_{yy}(x_1,f(x_1))-u_{yy}(x_2,f(x_2))|\\
\le & \bar{C}x_1^{-m-2-\gamma}\|\varphi\|_{C^{(-m)}_{2,\gamma}([0,1])}|(x_1,f(x_1))-(x_2,f(x_2))|^{\gamma}.
\end{split}
\end{equation*}
Then a similar argument as $u_{xy}$ yields that 
\begin{equation*}
\|u_{xx}\|_{C^{(-m-2)}_{0,\gamma}(\bar \Omega\cap \{x\in [0,2/3]\})}+\|u_{yy}\|_{C^{(-m-2)}_{0,\gamma}(\bar \Omega\cap \{x\in [0,2/3]\})}\le \bar{C}\|\varphi\|_{C^{(-m)}_{2,\gamma}([0,1])}.
\end{equation*}
 We complete our proof of Theorem \ref{thm:LocalEstimateW}.
\end{proof}

\begin{proof}[Proof of Theorem \ref{thm:GlobalEstimatesW}]
It suffices to show
\begin{equation*}
\norm{u}_{C^{2, \gamma}(\overline \Omega\cap\set{x\in[1/2, 1]})}\leq \bar{C}\norm{\varphi}_{C^{(-m)}_{2, \gamma}([0,1])}.
\end{equation*}
We cannot directly use the reflection transformation $\mathcal{P}_3$ to obtain the corresponding estimate as in the proof of Section 3. Because in the present section, the function blows up around the other vertex. The barrier function $H$ fails to work there. We need to make a modification to the proof.

In fact, by Theorem \ref{thm:LocalEstimateW}, we have
\begin{equation*}
\abs{u}\big|_{x=1/3}\leq \bar{C}\norm{\varphi}_{C^{(-m)}_{2, \gamma}([0,1])}.
\end{equation*}
Then for $x_0\in[1/2, 1]$
\begin{equation*}
\abs{v-P_{x_0}}\big|_{x=1/3}\leq \bar{C}\norm{\varphi}_{C^{(-m)}_{2, \gamma}([0,1])}.
\end{equation*}
And for the barrier function $H$ (the $r$ corresponds to a center at point $(x, y)=(1, 0)$), we have
\begin{equation*}
H|_{x=1/3}\geq \bar{C}>0.
\end{equation*}
Thus, we apply the maximum principle in the domain $\overline \Omega\cap\set{x\in[1/3, 1]}$ to obtain the estimate \eqref{eq:C2HEstimates}. The rest of the proof is the same.
\end{proof}

\section{Asymptotic States as $f\rightarrow0$}
In this section, we see that the estimate \eqref{eq:ObliqueDerivativeSchauder} leads to an asymptotic estimate as $f\rightarrow0$. First, in \eqref{eq:PotentialEquations}, we can differentiate the boundary value condition along the boundary to obtain the following derivatives list
\begin{equation*}
\begin{cases}
u(x, f(x))=\varphi(x),\\
l_fu(x, f(x))=u_x(x, f(x))+f'(x)u_y(x, f(x))=\varphi'(x),\\
l_0u(x, 0)=u_y(x, 0)+G(x)u_x(x, 0)=0,\\
L_fu(x, f(x))=u_{xx}(x, y)+2f'(x)u_{xy}(x, f(x))+\qnt{f'(x)}^2u_{yy}(x, f(x))\\
\quad\quad\quad\quad\quad\quad\quad+f''(x)u_y(x, f(x))=\varphi''(x),\\
Lu(x, y)=A(x, y)u_{xx}+B(x, y)u_{xy}+C(x, y)u_{yy}+D(x, y)u_{x}+E(x, y)u_{y}=0,\\
L_0u(x, 0)=u_{xy}(x, 0)+G(x)u_{xx}(x, 0)+G'(x)u_x(x, 0)=0.
\end{cases}
\end{equation*}
In above, we set $f(x)=0$ to define the asymptotic operators $l_f^*, l_0^*, L_f^*, L^*$ and $L_0^*$ and the asymptotic state $u^*$ over $[0, 1]\times\set{0}$ as follows
\begin{equation*}
\begin{cases}
u^*(x, 0)=\varphi(x),\\
l_f^*u^*(x, 0)=u_x^*(x, 0)=\varphi'(x),\\
l_0^*u^*(x, 0)=l_0u^*(x, 0)=u_y^*(x, 0)+G(x)u_x^*(x, 0)=0,\\
L_f^*u^*(x, 0)=u_{xx}^*(x, 0)=\varphi''(x),\\
L^*u^*(x, 0)=A(x, 0)u_{xx}^*+B(x, 0)u_{xy}^*+C(x, 0)u_{yy}^*+D(x, 0)u_{x}^*+E(x, 0)u_{y}^*=0,\\
L_0^*u^*(x, 0)=L_0u^*(x, 0)=u_{xy}^*(x, 0)+G(x)u_{xx}^*(x, 0)+G'(x)u_x^*(x, 0)=0.
\end{cases}
\end{equation*}
This is a linear equation of the following form derivatives $$\qnt{u^*(x, 0), u_x^*(x, 0), u_y^*(x, 0), u_{xx}^*(x, 0), u_{xy}^*(x, 0), u_{yy}^*(x, 0)}$$ and can be unique solved over $[0, 1]\times\set{0}$.

We can compare $u(x, y)$ and $u^*(x, 0)$ under these operators. For example,
\begin{equation*}
\begin{split}
\abs{L_f^*u(x, y)-L_f^*u^*(x, 0)}&=\abs{L_f^*u(x, y)-L_fu(x, f(x))}\\
&\leq\sigma_0\norm{u}_{C^{2, \gamma}(\Omega)}+f(x)^\gamma\norm{u}_{C^{2, \gamma}(\Omega)}\\
&\leq \sigma_0^\gamma\norm{\varphi}_{C^{2, \gamma}([0, 1])}.
\end{split}
\end{equation*}
Similarly, we have
\begin{equation*}
\begin{cases}
u(x, y)-u^*(x, 0)=\mathcal{O}_1,\\
l_f^*\qnt{u(x, y)-u^*(x, 0)}=\mathcal{O}_2,\\
l_0^*\qnt{u(x, y)-u^*(x, 0)}=\mathcal{O}_3,\\
L_f^*\qnt{u(x, y)-u^*(x, 0)}=\mathcal{O}_4,\\
L^*\qnt{u(x, y)-u^*(x, 0)}=\mathcal{O}_5,\\
L_0^*\qnt{u(x, y)-u^*(x, 0)}=\mathcal{O}_6,
\end{cases}
\end{equation*}
where $\abs{\mathcal{O}_i}\leq \bar{C}\sigma_0^\gamma\norm{\varphi}_{C^{2, \gamma}([0, 1])}$. Therefore, $\nabla^i u^*(x, 0)-\nabla^i u(x, y)$ satisfy a linear equation whose non-homogeneous terms can be estimated by $\bar{C}\sigma_0^\gamma\norm{\varphi}_{C^{2, \gamma}([0, 1])}$. To conclude, we have
\begin{thm}\label{thm:AsymptoticEstimate}
Under the assumption of Theorem \ref{thm:GlobalEstimates}, $u, u_x, u_y, u_{xx}, u_{xy}, u_{yy}$ is the perturbation of $u^*, u_x^*, u_y^*, u_{xx}^*, u_{xy}^*, u_{yy}^*$ and we have
\begin{equation}\label{eq:AsymptoticEstimate}
\abs{\nabla^i u^*(x, 0)-\nabla^i u(x, y)}\leq \bar{C}\sigma_0^\gamma\norm{\varphi}_{C^{2, \gamma}([0, 1])}.
\end{equation}
\end{thm}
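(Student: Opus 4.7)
My plan is to read the six comparison identities displayed just before Theorem~\ref{thm:AsymptoticEstimate} as a $6\times 6$ linear algebraic system at each fixed $(x,y)\in\overline\Omega$ for the six differences
\[
(w_0,w_1,w_2,w_3,w_4,w_5):=\bigl(u-u^*,\,u_x-u_x^*,\,u_y-u_y^*,\,u_{xx}-u_{xx}^*,\,u_{xy}-u_{xy}^*,\,u_{yy}-u_{yy}^*\bigr),
\]
(where starred quantities are evaluated at $(x,0)$ and the unstarred ones at $(x,y)$), with right-hand side $(\mathcal O_1,\ldots,\mathcal O_6)$. The proof splits into two parts: (i) bounding $|\mathcal O_i|\le \bar C\sigma_0^\gamma\|\varphi\|_{C^{2,\gamma}([0,1])}$ for each $i$, and (ii) inverting the algebraic system with a uniformly bounded inverse.

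For (i), the starting point is Theorem~\ref{thm:GlobalEstimates}, which gives $\|u\|_{C^{2,\gamma}(\overline\Omega)}\le \bar C\|\varphi\|_{C^{2,\gamma}([0,1])}$. Each residual is of one of two types. The first type, exemplified by $\mathcal O_4$ in the excerpt, is the discrepancy between a starred boundary operator at $(x,y)$ and the corresponding full boundary operator at $(x,f(x))$; it decomposes into (a) a Hölder oscillation of $\nabla^2 u$ across a $y$-interval of length at most $f(x)\le \sigma_0$, contributing $\bar C\sigma_0^\gamma\|\varphi\|_{C^{2,\gamma}}$ by the uniform Hölder bound, and (b) lower-order terms carrying factors $f'(x)$, $f''(x)$ or $(f'(x))^2$, each bounded by $\sigma_0\le \sigma_0^\gamma$. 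The second type, exemplified by $\mathcal O_5=L^*u(x,y)-Lu(x,y)$, expands into coefficient increments $(A(x,0)-A(x,y))u_{xx}(x,y)+\cdots$, each bounded by $\|A\|_{C^\gamma}y^\gamma\le \bar C\sigma_0^\gamma$ times the uniform $C^{2}$-bound on $u$. The oblique residuals $\mathcal O_3,\mathcal O_6$ fall under the first type once one uses that the original oblique condition is satisfied at $y=0$, reducing the discrepancy to Hölder oscillations of $\nabla u$ and $\nabla^2 u$ over $[0,y]\subset[0,\sigma_0]$.

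For (ii), the system reads
\[
w_0=\mathcal O_1,\ w_1=\mathcal O_2,\ w_2+G(x)w_1=\mathcal O_3,\ w_3=\mathcal O_4,\ w_4+G(x)w_3+G'(x)w_1=\mathcal O_6,
\]
together with the elliptic closure $A(x,0)w_3+B(x,0)w_4+C(x,0)w_5+D(x,0)w_1+E(x,0)w_2=\mathcal O_5$. By back-substitution the first five equations determine $w_0,\ldots,w_4$ directly, after which the last equation determines $w_5$ upon dividing by $C(x,0)$. Since taking $\xi=(0,1)$ in the ellipticity condition~\eqref{eq:uniformelliptic} forces $C(x,0)\ge \lambda>0$, this inversion is uniformly bounded by a constant depending only on $\lambda,\Lambda,\|G\|_{C^1}$. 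Combining (i) and (ii) yields \eqref{eq:AsymptoticEstimate} for $i=0,1,2$.

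The main obstacle I anticipate is the careful bookkeeping in Step (i): for each of the six equations one must identify precisely which cancellation (the boundary identity on $\partial_+\Omega$, the boundary identity on $\partial_0\Omega$, or the interior equation $\mathcal L u=0$) reduces the discrepancy to either a Hölder oscillation on a $y$-interval of length $\le \sigma_0$ or a coefficient increment of size $y^\gamma$, and to verify that every resulting exponent collapses to at worst $\sigma_0^\gamma$. Once this structure is recognised, no new analytic input beyond Theorem~\ref{thm:GlobalEstimates} is required.
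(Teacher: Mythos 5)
Your proposal is correct and follows essentially the same route as the paper: the six differentiated boundary/interior identities are read as a linear algebraic system for the six differences, the residuals $\mathcal O_i$ are bounded by $\bar C\sigma_0^\gamma\norm{\varphi}_{C^{2,\gamma}([0,1])}$ via Theorem \ref{thm:GlobalEstimates} exactly as in the paper's model computation for $L_f^*$, and the system is inverted with a uniformly bounded inverse. Your explicit back-substitution and the observation that $C(x,0)\ge\lambda$ (from $\xi=(0,1)$ in \eqref{eq:uniformelliptic}) merely make precise the unique solvability the paper asserts without detail.
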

\begin{rem}
Here there is an unreasonable definition. Because function $u^*$ is only defined on line segment $[0, 1]\times\set{0}$, how can we define its derivatives 
in the $y$ direction? In fact, it can be understood in two ways. On the one hand, we can consider some function $U$ defined in the neighbourhood of line segment $[0, 1]\times\set{0}$ and $\nabla^i U(x, 0)=\nabla^i u^*(x, 0).$ On the other hand, we can consider that there is a sequence of functions, each defined within the neighbourhood of line segment $[0, 1]\times\set{0}$, and the derivatives of the sequence of functions converge to the form derivatives of $u^*$ on $[0, 1]\times\set{0}$.
The introduction of $u^*$ has the benefit in providing a more optimal asymptotic estimate \eqref{eq:AsymptoticEstimate}  than the a priori estimate \eqref{eq:ObliqueDerivativeSchauder}.
\end{rem}

\section*{Acknowledgments}
The second author is partial supported by National Natural Science Foundation of China 12141105.


\begin{thebibliography}{99}

\bibitem{AltCaffarelli}
Alt, H. W., Caffarelli, L. A., Existence and regularity for a minimum problem with free boundary. J. Reine Angew. Math. 325 (1981), 105--144.

\bibitem{Anderson}
Anderson, J., Hypersonic and High-Temperature Gas Dynamics, AIAA Education Series, 2nd ed., Reston, 2006.

\bibitem{ArrietaNakasatoPereira}
Arrieta, J. M., Nakasato, J. C., Pereira, M. C., The p-Laplacian equation in thin domains: the unfolding approach. J. Differential Equations 274 (2021), pp. 1–34.

\bibitem{AzzamKreyszig}
Azzam, A., Kreyszig, E., On solutions of elliptic equations satisfying mixed boundary conditions, SIAM J. Math. Anal., 13 (1982), 254–262.

\bibitem{BaeXiang}
Bae, M., Xiang, W., Detached shock past a blunt body, Acta Appl. Math., 188, Paper No. 7, 79 pp, 2023.

\bibitem{BorisovFreitas}
Borisov, D., Freitas, P., Asymptotics of Dirichlet eigenvalues and eigenfunctions of the Laplacian on thin domains in ($\mathbb{R}^d$ ), J. Funct. Anal. 258 (2010), no. 3, 893–912.

\bibitem{Caffarelli}
Caffarelli, L. A., Free boundary problems. A survey. In Topics in Calculus of Variations (Montecatini Terme, 1987), Lecture Notes in Math., 1365, Springer, Berlin, 1989, pp. 31–61.

\bibitem{CaffarelliSalsa}
Caffarelli, L., Salsa, S., A geometric approach to free boundary problems, Graduate Studies in Mathematics, 68, American Mathematical Society, Providence, RI, 2005.

\bibitem{ChenGQ}
Chen, G. Q., Partial differential equations of mixed type—analysis and applications, Notices of the American Mathematical Society 70 (2023), no. 1, 8–23.

\bibitem{ChenFang}
Chen, G. Q., Fang, B. X., Stability of transonic shock-fronts in three-dimensional conical steady potential flow past a perturbed cone, Discrete Contin. Dyn. Syst. 23 (2009), no. 1-2, 85–114.

\bibitem{ChenFeldman1}
Chen, G. Q., Feldman, M., The mathematics of shock reflection-diffraction and von Neumann's conjectures, Ann. of Math. Stud., 197, Princeton University Press, Princeton, NJ, xiv+814 pp, 2018.

\bibitem{ChenFeldman2}
Chen, G. Q., Feldman, M., Multidimensional transonic shock waves and free boundary problems, Bull. Math. Sci., 12(1), Paper No. 2230002, 85 pp, 2022.

\bibitem{ChenFeldmanXiang}
Chen, G. Q., Feldman, M., Xiang, W., Convexity of self-similar transonic shocks and free boundaries for the Euler equations for potential flow, Archive for Rational Mechanics and Analysis, 238(1), 47-124, 2020.

\bibitem{ChenKuangXiang}
Chen, G. Q., Kuang, J., Zhang, Y. Q., Stability of conical shocks in the three-dimensional steady supersonic isothermal flows past Lipschitz perturbed cones, SIAM J. Math. Anal., 53, no. 3, 2811–2862, 2021.

\bibitem{ChenS}
Chen, S. X., High speed flight and partial differential equations, Chinese Annals of Mathematics, Series B, 43 (2022), no. 5, 855–868.

\bibitem{ChenXinYin}
Chen, S. X., Xin, Z. P., Yin, H. C., Global shock waves for the supersonic flow past a perturbed cone, Comm. Math. Phys., 228(1), 47-84, 2002.



\bibitem{EllingLiu}
Elling, V., Liu, T. P., Supersonic flow onto a solid wedge, Communications on Pure and Applied Mathematics, 61(10), 1347-1448, 2008.

\bibitem{FerraressoProvenzano}
Ferraresso, F., Provenzano, L., On the eigenvalues of the biharmonic operator with Neumann boundary conditions on a thin set, Bull. Lond. Math. Soc. 55 (2023), 1154–1177.

\bibitem{FriedlanderSolomyak}
Friedlander, L., Solomyak, M., On the spectrum of the Dirichlet Laplacian in a narrow strip, Israel J. Math. 170 (2009), 337–354.

\bibitem{GilbargHormander}
Gilbarg, D., Hörmander, L., Intermediate Schauder estimates. Arch. Rational Mech. Anal. 74 (1980), no. 4, pp. 297–318.

\bibitem{GilbargTrudinger}
Gilbarg, D., Trudinger, N. S., Elliptic Partial Differential Equations of Second Order, Second edition, Grundlehren der Mathematischen Wissenschaften, 224, Springer, Berlin, 1983.

\bibitem{Grieser}
Grieser, D., Spectra of graph neighborhoods and scattering, Proc. London Math. Soc. (3) 97 (2008), no. 3, 718–752.

\bibitem{Grieser}
Grieser, D., Thin tubes in mathematical physics, global analysis and spectral geometry, in Analysis on graphs and its applications, Proc. Sympos. Pure Math., 77, Amer. Math. Soc., Providence, RI, 2008, 565–593.

\bibitem{Grisvard}
Grisvard, P., Elliptic Problems in Nonsmooth Domains, Classics in Applied Mathematics, 69, SIAM, Philadelphia, PA, 2011, xx+410 pp.

\bibitem{HanLin}
Han, Q., Lin, F., Elliptic Partial Differential Equations, Courant Lecture Notes in Mathematics, 1, Courant Institute of Mathematical Sciences, New York; American Mathematical Society, Providence, RI, 2011, x+147 pp.

\bibitem{HuZhang}
Hu, D., Zhang, Y. Q., Global conic shock wave for the steady supersonic flow past a curved cone, SIAM J. Math. Anal., 51(3), 2372-2389, 2019.

\bibitem{JimboKurata}
Jimbo, S., Kurata K., Asymptotic behavior of eigenvalues of the Laplacian on a thin domain under the mixed boundary condition, Indiana Univ. Math. J., 65(3), 867-898, 2016.

\bibitem{JinQuYuan1}
Jin, Y. J., Qu, A. F., Yuan, H. R., Radon measure solutions for steady compressible hypersonic-limit Euler flows passing cylindrically symmetric conical bodies, Commun. Pure Appl. Anal., 20, no. 7-8, 2665–2685, 2021.




\bibitem{KozlovMaz'yaRossman}
Kozlov, V. A., Maz'ya, V. G., Rossman, J., Spectral Problems Associated with Corner Singularities of Solutions of Elliptic Equations, AMS Mathematical Surveys and Monographs, vol. 85, American Mathematical Society, Providence, RI, 2001.

\bibitem{Krejcirik}
Krejčiřík, D., Spectrum of the Laplacian in a narrow curved strip with combined Dirichlet and Neumann boundary conditions, ESAIM Control Optim. Calc. Var. 15 (2009), no. 3, 555–568.


\bibitem{KuangXiangZhang}
Kuang, J., Xiang, W., Zhang, Y. Q., Hypersonic similarity for the two dimensional steady potential flow with large data, Ann. Inst. H. Poincaré C Anal. Non Linéaire, 37, no. 6, 1379–1423, 2020.

\bibitem{KuchmentZeng}
Kuchment, P., Zeng, H., Asymptotics of spectra of Neumann Laplacians in thin domains, in Advances in differential equations and mathematical physics, Contemp. Math., 327, Amer. Math. Soc., Providence, RI, 2003, 199–213.

\bibitem{LadyzenskajaSolonnikovUral'tseva}
Ladyženskaja, O. A., Solonnikov, V. A., Ural'tseva, N. N., Linear and Quasilinear Equations of Parabolic Type, Translations of Mathematical Monographs, Vol. 23, American Mathematical Society, Providence, R.I., 1967. (Translated from Russian by S. Smith.)

\bibitem{LiWittYin}
Li, J., Witt, I., Yin, H. C., On the global existence and stability of a multi-dimensional supersonic conic shock wave, Comm. Math. Phys., 329(2), 609-640, 2014.

\bibitem{Lieberman1}
Lieberman, G. M. Oblique Derivative Problems for Elliptic Equations, World Scientific Publishing Co. Pte. Ltd., Hackensack, NJ, 2013, xvi+509 pp.

\bibitem{LienLiu}
Lien, W. C., Liu, T. P., Nonlinear stability of a self-similar 3-dimensional gas flow, Comm. Math. Phys., 204(3), 525-549, 1999.

\bibitem{Lin}
Lin, F. Lectures on elliptic free boundary problems, in Lectures on the Analysis of Nonlinear Partial Differential Equations. Part 4, Morningside Lect. Math., vol. 4, International Press, Somerville, MA, 2016, pp. 115–193.

\bibitem{Miller}
Miller, K. Barriers on cones for uniformly elliptic operators, Ann. Mat. Pura Appl. 76 (1967), 93--105.


\bibitem{PereiraSilva}
Pereira, M. C., Silva, R. P. Remarks on the p-Laplacian on thin domains. In Contributions to Nonlinear Elliptic Equations and Systems, Progr. Nonlinear Differential Equations Appl., 86, Birkhäuser/Springer, Cham, 2015, pp. 389–403.

\bibitem{Post}
Post, O. Spectral convergence of quasi-one-dimensional spaces, Ann. Henri Poincaré 7 (2006), no. 5, 933–973.


\bibitem{QuYuanZhao}
Qu, A. F., Yuan, H. R., Zhao, Q., Hypersonic limit of two-dimensional steady compressible Euler flows passing a straight wedge, ZAMM Z. Angew. Math. Mech., 100, no. 3, e201800225, 14 pp, 2020.

\bibitem{RubinsteinSchatzman1}
Rubinstein, J., Schatzman, M. Variational problems on multiply connected thin strips. I. Basic estimates and convergence of the Laplacian spectrum, Archive for Rational Mechanics and Analysis 160 (2001), no. 4, 271–308.

\bibitem{RubinsteinSchatzman2}
Rubinstein, J., Schatzman, M. Variational problems on multiply connected thin strips. II. Convergence of the Ginzburg-Landau functional, Arch. Ration. Mech. Anal. 160 (2001), no. 4, 309–324.

\bibitem{Van}
Van Dyke, M. D., A study of hypersonic small-disturbance theory, NACA Rep., no. 1194, ii+21 pp, 1954.

\bibitem{WangZhang}
Wang, Z. J., Zhang, Y. Q., Steady supersonic flow past a curved cone, J. Differential Equations, 247(6), 1817-1850, 2009.

\bibitem{XinYin}
Xin, Z. P., Yin, H. C., Global multidimensional shock wave for the steady supersonic flow past a three-dimensional curved cone, Anal. Appl. (Singap.), 4(2), 101-132, 2006.

\bibitem{XuYin}
Xu, G., Yin, H. C., Global multidimensional transonic conic shock wave for the perturbed supersonic flow past a cone, SIAM J. Math. Anal., 41(1), 178-218, 2009.



























\end{thebibliography}
\end{document}